\documentclass[english,a4paper]{article}
\usepackage[T1]{fontenc}
\usepackage[utf8]{inputenc}
\usepackage[english]{babel}
\usepackage{amsmath}
\usepackage{amssymb}
\usepackage{verbatim}
\makeatletter

\newtheorem{theorem}{Theorem}

\newtheorem{claim}{Proposition}

\newtheorem{lemma}{Lemma}
\def\@opargbegintheorem#1#2#3{%
  \trivlist
  \item[\MakeLinkTarget{\@currentcounter}\hskip\labelsep
        {\bfseries #1\ #2\ (#3).}]\itshape}

\newenvironment{remark}{\par\noindent\textbf{Remark.}\ }{\par}

\newenvironment{proof}[1][Proof]{%
  \par\noindent
  \if\relax\detokenize{#1}\relax\else\textit{#1.}\ \fi
}{\nobreak\hfill\rule{0.5em}{0.5em}\par}
\makeatother

\title{Distribution of the First Hitting Point on a Boundary Set for a Random Walk in a Lattice Strip}
\author{I.M. Kuzin,  A.A. Zamyatin \\ [0.5em]
         Lomonosov Moscow State University}
\date{}
\begin{document}
\maketitle
\begin{abstract}
We consider a time-homogeneous random walk in a strip of a two-dimensional lattice whose transition probabilities are invariant under shifts along one of the coordinates. We study the limiting distribution of the first hitting point into a boundary set as the initial condition tends to infinity. Explicit expressions are obtained for this limiting distribution for negative and zero mean drift.
\end{abstract}

\section{Introduction}

We consider a time-homogeneous random walk
$\xi_n=(\xi_n^1,\xi_n^2)$ in a strip of the two-dimensional lattice
$\Pi=\mathbb Z\times S$, where $S=\{1,\ldots,N\}$. We assume that its transition probabilities are invariant under translations along the first coordinate, while the first
coordinate changes by at most $1$ per time step; thus, the possible increments in the first coordinate are $-1,0,1$.
Such a walk belongs to the class of random walks in domains with boundaries \cite{FMM}.

An important characteristic of the model is the mean drift in the first coordinate,
\[
v=\sum_{j=1}^N \pi_j m_j,
\]
where $m_j$ is the mean increment of the first coordinate conditional on the second coordinate being $j$, and $\pi_j$ is the stationary distribution of the induced chain \cite{FMM} (defined precisely below).

In this paper we obtain the generating function of the distribution of the first hitting point on the set
$ A=\{(0,j):j\in S\} $; we prove that, as the initial first coordinate $i_0 \to \infty$, the limiting distribution of the first hitting point on this set coincides with the
stationary distribution $\nu$ of the embedded Markov chain
$\eta_k=\xi^2_{\tau_k}$, which records the second component $\xi^2_{\tau_k}$ at the random times $\tau_k$ when the first coordinate successively decreases by $1$.
 
Theorems \ref{th2} and \ref{th3} provide explicit expressions for the stationary distribution $\nu$ in the cases $v<0$ and $v=0$, respectively.

A finite algorithm is given for computing the transition matrix of the embedded
chain.

We also apply the results to a system of independent particles starting
from sites whose first coordinate does not exceed $n$. A strong law of large numbers is proved for the proportions of particles absorbed at each point of $A$,
providing a statistical
interpretation of the stationary distribution of the embedded chain. Finally, we present
an example of a random walk in a strip of height $2$, for which all the principal quantities
are computed explicitly.

The paper is organized as follows. Section \ref{s2}
introduces the main definitions and states Theorems~1--3.
Section \ref{s3}
contains auxiliary Lemmas~2--9 and the proofs of Theorems~1--3, and
describes a finite spectral algorithm for computing the transition matrix of the embedded chain. In
Section \ref{s4},
the results are applied to a system of independent particles moving in the strip; here we state
and prove Theorem~4. In the final
section, all the principal quantities are calculated explicitly for a random walk in a strip of height~2.

\section{Main results} \label{s2}
Consider a time-homogeneous Markov chain $\xi_{n}=(\xi_{n}^{1},\xi_{n}^{2})$
on the state space $\Pi=\mathbb Z\times S$, where $S=\{1,2,\ldots,N\}$. Define its transition
probabilities by
\[
p_{l_{1}j_{1}}^{lj}=P(\xi_{n+1}=(l_{1},j_{1})\;|\;\xi_{n}=(l,j))
\]
where $(l,j),(l_{1},j_{1})\in\Pi$.

We impose the following assumptions:
\begin{itemize}

\item Bounded increments in the first coordinate: if $|l-l_{1}|>1$, then
\[
p_{l_{1}j_{1}}^{lj}=0
\]
\item If $|l-l_1|\leq 1$ and $\forall j,j_1 \in S$, then
\[
p^{lj}_{l_1j_1}>0.
\]
\item Translation invariance in the first coordinate:
\begin{equation}
p_{l_{1}j_{1}}^{lj}=p_{l_{1}-l,j_{1}}^{0j}\label{hom}
\end{equation}
\end{itemize}

Define the induced Markov chain as the finite-state Markov
chain on $S$ with transition probabilities
\begin{equation}
_{0}p_{j_{1}}^{j}=p_{l+1j_{1}}^{lj}+p_{lj_{1}}^{lj}+p_{l-1j_{1}}^{lj}.\label{trans_prob_ind_chain}
\end{equation}
Clearly, this chain is irreducible and aperiodic.

We use the notation
\[
\mathbf 1:=(1,\ldots,1)^T\in\mathbb R^N
\]for the vector whose components are all equal to \(1\).
Let $\pi=(\pi_j)_{j\in S}$ denote the stationary distribution of the induced
chain. Henceforth, $\pi$ is regarded as a column vector, so that $\pi^TP=\pi^T$ and $\pi^T\mathbf1=1$. Define the mean increment in the first coordinate in phase $j$ by
\begin{equation}
 m_j=E\bigl(\xi_{n+1}^1-\xi_n^1\mid \xi_n=(l,j)\bigr),\qquad j\in S.
 \label{mean_jump}
\end{equation}
By translation invariance, this conditional expectation depends
neither on $n$ nor on $l$.

\begin{claim} For the first coordinate of the chain $\xi_n$, we have
\[
\frac{\xi_n^1}{n}\xrightarrow[n\to\infty]{a.s.}\sum_{j=1}^N\pi_jm_j.
\]
\end{claim}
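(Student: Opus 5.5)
We plan to use a martingale decomposition of the first coordinate together with the ergodic theorem for the induced chain. Put $\Delta_k=\xi^1_{k}-\xi^1_{k-1}\in\{-1,0,1\}$. Then
\[
\xi^1_n=\xi^1_0+\sum_{k=1}^n\bigl(\Delta_k-m_{\xi^2_{k-1}}\bigr)+\sum_{k=1}^n m_{\xi^2_{k-1}}=:\xi^1_0+M_n+\sum_{k=1}^n m_{\xi^2_{k-1}} .
\]
Since $\xi^1_0/n\to0$, it suffices to show $M_n/n\to0$ a.s. and $\frac1n\sum_{k=1}^n m_{\xi^2_{k-1}}\to\sum_j\pi_jm_j$ a.s.

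For the second term, translation invariance (\ref{hom}) implies that the second coordinate $\xi^2_n$ is itself a Markov chain on $S$. To check this, compute $P(\xi^2_{n+1}=j_1\mid\xi_n=(l,j))=\sum_{d\in\{-1,0,1\}}p^{0j}_{d,j_1}={}_0p^j_{j_1}$, which depends only on $j$. Hence $\xi^2_n$ is exactly the induced chain with transition probabilities (\ref{trans_prob_ind_chain}). This chain is finite and irreducible, since all transition probabilities are positive. The strong law of large numbers for finite irreducible Markov chains (the ergodic theorem) gives $\frac1n\sum_{k=0}^{n-1}f(\xi^2_k)\to\sum_j\pi_jf(j)$ a.s. for every function $f$ on $S$, from any initial distribution. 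Applying it to $f(j)=m_j$ handles the drift term.

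For the martingale term, let $\mathcal F_k=\sigma(\xi_0,\dots,\xi_k)$. By the Markov property and the definition (\ref{mean_jump}), $E(\Delta_k\mid\mathcal F_{k-1})=m_{\xi^2_{k-1}}$. Hence the increments $D_k=\Delta_k-m_{\xi^2_{k-1}}$ are martingale differences, and they are bounded by $|D_k|\le2$. A strong law for martingales with bounded increments then gives $M_n/n\to0$ a.s. The quickest route is the Azuma--Hoeffding inequality, $P(|M_n|>\varepsilon n)\le 2e^{-\varepsilon^2 n/8}$, which is summable in $n$, so Borel--Cantelli finishes the argument. Alternatively, $\sum_k D_k/k$ converges a.s. because it is an $L^2$-bounded martingale, and Kronecker's lemma then applies.

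The only point needing care is the Markov property of $\xi^2_n$ alone, that is, that the induced chain really describes the law of the second coordinate. Everything else consists of standard limit theorems. This step follows directly from the translation invariance (\ref{hom}), as computed above.
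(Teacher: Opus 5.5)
Your proposal is correct and follows the paper's proof: you decompose $\xi^1_n$ into a sum of bounded martingale differences plus the ergodic average of $m_{\xi^2_k}$ over the induced chain, then apply a martingale strong law and the ergodic theorem. Beyond the paper, you explicitly check via translation invariance that $\xi^2_n$ is itself the induced Markov chain, which the paper uses without comment, and you prove the martingale strong law directly (Azuma--Hoeffding with Borel--Cantelli, or Kronecker's lemma) rather than citing it.
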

\begin{proof}
Set $\Delta_k=\xi_{k+1}^1-\xi_k^1$ and
$\mathcal F_k=\sigma(\xi_0,\ldots,\xi_k)$. By the law of total
expectation and the definition of $m_j$,
\[
 E(\Delta_k\mid\mathcal F_k)=m_{\xi_k^2}.
\]
Thus $D_{k+1}:=\Delta_k-m_{\xi_k^2}$ form a sequence of bounded
martingale differences. By the strong law of large numbers for martingale
differences (see, e.g., \cite{base5}),
\[
 \frac1n\sum_{k=0}^{n-1}D_{k+1}\xrightarrow[n\to\infty]{a.s.}0.
\]
On the other hand, $\xi_n^2$ is a finite-state irreducible aperiodic Markov chain
with stationary distribution $\pi$; hence, by the ergodic theorem,
\[
 \frac1n\sum_{k=0}^{n-1}m_{\xi_k^2}
 \xrightarrow[n\to\infty]{a.s.}\sum_{j=1}^N\pi_jm_j.
\]
Since
\[
 \xi_n^1-\xi_0^1=\sum_{k=0}^{n-1}\Delta_k
 =\sum_{k=0}^{n-1}m_{\xi_k^2}+\sum_{k=0}^{n-1}D_{k+1},
\]
the assertion follows.
\end{proof}
Let
\begin{equation}
 v=\sum_{j=1}^{N}\pi_jm_j.\label{induced_vector}
\end{equation}
Thus, $v$ is the mean drift in the first coordinate.

\paragraph{The embedded Markov chain.}
Let $i_0\in\mathbb N$ and $\xi_0^1=i_0$. Set $\tau_0=0$ and, for
$k=1,\ldots,i_0$
\[
 \tau_k=\inf\{n\geq0:\xi_n^1=i_0-k\}.
\]
In particular, $\tau_{i_0}$ is the first hitting time of level $0$.
By the strong Markov property and translation invariance, the distribution of the chain's state at the next descent by one level depends on the past only through \(\xi^2_{\tau_k}\). Consequently,
\[
\eta_k=\xi^2_{\tau_k}
\]
is a time-homogeneous Markov chain.

If \(v=0\), the first coordinate of the chain is almost surely unbounded both above and below:
\[
\liminf_{n\to\infty}\xi_n^1=-\infty,\qquad
\limsup_{n\to\infty}\xi_n^1=+\infty
\](see \cite[Example~6.1]{base7}).
Since increments in the first coordinate can only be \(-1,0,1\), every lower level is almost surely reached. Therefore,
\[
P(\tau_k<\infty)=1.
\]
Consequently, every lower level is reached almost surely, and hence
$P(\tau_k<\infty)=1$ for all $k\leq i_0$.

Consider the set $A=\{(0,j):j\in S\}\subset\Pi$ and define
\[
 h_{0j}^{i_0j_0}
 =P\{\xi^2_{\tau_{i_0}}=j,\,\tau_{i_0}<\infty\mid\xi_0=(i_0,j_0)\}.
\]
When $v\leq0$, we have $P(\tau_{i_0}<\infty)=1$, so
\[
 h_{0j}^{i_0j_0}
 =P\{\xi^2_{\tau_{i_0}}=j\mid\xi_0=(i_0,j_0)\}.
\]

Define the embedded chain by
\[
 \eta_0=j_0,\qquad \eta_k=\xi^2_{\tau_k},\quad k=1,\ldots,i_0.
\]
Denote its transition probabilities by
\[
 q_{ij}=P\{\eta_{k+1}=j\mid\eta_k=i\},\qquad i,j\in S,
\]
and set $Q=(q_{ij})_{i,j=1}^N$.

\begin{lemma}\label{l1}
Suppose that $v\leq0$. Then
\[
 h_{0j}^{i_0j_0}
 =\sum_{j_1,\ldots,j_{i_0-1}\in S}
 q_{j_0j_1}q_{j_1j_2}\cdots q_{j_{i_0-1}j}
 =(Q^{i_0})_{j_0j}.
\]
\end{lemma}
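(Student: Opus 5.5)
The plan is to show that $(\eta_k)_{k=0}^{i_0}$ is a time-homogeneous Markov chain with transition matrix $Q$ and initial state $\eta_0=j_0$. Once this is in place, the identity follows from the Chapman--Kolmogorov equations, because $h^{i_0j_0}_{0j}=P(\eta_{i_0}=j\mid\eta_0=j_0)$. The reason is that $\tau_{i_0}$ is exactly the first hitting time of level $0$. The walk enters level $0$ from level $1$ by a step of size at most $1$, so the first visit to $A$ occurs at $\tau_{i_0}$ and its second coordinate is $\eta_{i_0}$.

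\textbf{Step 1: all stopping times are finite.} Since $v\le 0$, every $\tau_k$ is finite almost surely. For $v=0$ this was noted above, via $\liminf\xi^1_n=-\infty$. For $v<0$ it follows from the Proposition: $\xi^1_n/n\to v<0$ almost surely, so $\xi^1_n\to-\infty$. Because the first coordinate moves by at most $1$ per step, it must pass through every intermediate level. Hence $\tau_1<\tau_2<\dots<\tau_{i_0}<\infty$ almost surely, and each $\tau_k$ is a stopping time for $\mathcal F_n$. This is where the hypothesis $v\le0$ enters. Without it, $Q$ would be substochastic and the event $\{\tau_{i_0}<\infty\}$ would need separate handling.

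\textbf{Step 2: the Markov property with kernel $Q$.} We have $\xi_{\tau_k}=(i_0-k,\eta_k)$, and $\tau_{k+1}$ is the first time after $\tau_k$ that the walk reaches level $i_0-k-1$. By the strong Markov property at $\tau_k$, the conditional law of $(\xi_{\tau_k+n})_{n\ge0}$ given $\mathcal F_{\tau_k}$ is the law of the chain started at $(i_0-k,\eta_k)$. Translation invariance \eqref{hom} shows that, started from $(l,i)$, the second coordinate at the first hitting of level $l-1$ has a law not depending on $l$. Call this law $q_{i\cdot}$; it coincides with the definition of $Q$. Therefore
\[
P(\eta_{k+1}=j\mid\mathcal F_{\tau_k})=q_{\eta_k j},
\]
and this holds uniformly in $k$. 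Since $\eta_0,\dots,\eta_k$ are $\mathcal F_{\tau_k}$-measurable, iterating the tower property gives
\[
P(\eta_1=j_1,\dots,\eta_{i_0}=j_{i_0})=q_{j_0j_1}\cdots q_{j_{i_0-1}j_{i_0}}.
\]
Summing over $j_1,\dots,j_{i_0-1}$ with $j_{i_0}=j$ gives the sum in the statement, which equals $(Q^{i_0})_{j_0j}$ by matrix multiplication.

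\textbf{Main obstacle.} The delicate point is the translation-invariance step. One must check that the event ``first hit of level $l-1$ occurs in phase $j$'' is the image, under the shift by $l$, of the corresponding event for level $-1$ started at $(0,i)$. This requires that the shift maps trajectories to trajectories with the same probabilities, which I would verify on finite cylinder sets using \eqref{hom} and then extend by countable additivity over the value of the hitting time.
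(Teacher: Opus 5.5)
Your proof is correct and is essentially the paper's argument: both split the path at the successive descent times $\tau_1<\cdots<\tau_{i_0}$ and use translation invariance \eqref{hom} to identify each descent with one step of $Q$. The paper does this by explicit path sums, writing $h^{i_0j_0}_{0j}$ as a sum of products $\hat q^{j_0}_{j_1}(k_1)\cdots\hat q^{j_{i_0-1}}_{j}(k_{i_0})$ over the descent durations and then summing out the $k_r$, whereas you invoke the strong Markov property and the tower rule directly; as the paper's Remark notes, $v\le0$ is needed only to make $Q$ stochastic, not for the decomposition itself.
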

\begin{proof}
Set $\hat{q}_{j}^{i}(0)=0$ and
\[
\hat{q}_{j}^{i}(k)=P(\tau_{1}=k,\xi_{\tau_{1}}^{2}=j|\xi_{0}^{2}=i)=\sum p_{l_{1}i_{1}}^{li}p_{l_{2}i_{2}}^{l_{1}i_{1}}...p_{l_{k}i_{k}}^{l_{k-1}i_{k-1}},k>0
\]
where the sum is taken over all paths of length $k$ such that $l_{s}\geq l,s=1,...,k-2$,
$l_{k-1}=l,l_{k}=l-1,i_{k}=j$. By translation invariance (\ref{hom}),
this definition is valid because the right-hand side of the preceding equality
does not depend on $l$.

We may therefore write
\[
q_{j}^{i}=\sum_{k=1}^{\infty}\hat{q}_{j}^{i}(k)
\]

The absorption probability can be expressed as
\[
h_{0j}^{i_{0}j_{0}}=\sum_{n\geq i_{0}}P\{\tau_{i_{0}}=n,\xi_{\tau_{i_{0}}}^{2}=j|\xi_{0}=(i_{0},j_{0})\}=\sum\sum_{n\geq i_{0}}\sum\hat{q}_{j_{1}}^{j_{0}}(k_{1})\hat{q}_{j_{2}}^{j_{1}}(k_{2})...\hat{q}_{j}^{j_{i_{0}-1}}(k_{i_{0}})
\]
The summations run over all $n\geq i_0$, all intermediate phases
$(j_1,\ldots,j_{i_0-1})\in S^{i_0-1}$, and all positive integers
$k_1,\ldots,k_{i_0}$ satisfying $k_1+\cdots+k_{i_0}=n$.

Then
\[
\sum_{n\geq i_{0}}\sum_{k_{1}+k_{2}+...+k_{i_{0}}=n}\hat{q}_{j_{1}}^{j_{0}}(k_{1})\hat{q}_{j_{2}}^{j_{1}}(k_{2})...\hat{q}_{j}^{j_{i_{0}-1}}(k_{i_{0}})=\sum_{k_{1}=1}^{\infty}\sum_{k_{2}=1}^{\infty}\dots\sum_{k_{i_{0}}=1}^{\infty}\hat{q}_{j_{1}}^{j_{0}}(k_{1})\hat{q}_{j_{2}}^{j_{1}}(k_{2})...\hat{q}_{j}^{j_{i_{0}-1}}(k_{i_{0}})=q_{j_{1}}^{j_{0}}q_{j_{2}}^{j_{1}}...q_{j}^{j_{i_{0}-1}}
\]

Consequently,
\[
h_{0j}^{i_{0}j_{0}}=\sum_{j_{1},j_{2},...,j_{i_{0}-1}}q_{j_{1}}^{j_{0}}q_{j_{2}}^{j_{1}}...q_{j}^{j_{i_{0}-1}}=q_{j}^{j_{0}}(i_{0}),
\]
where $q_{j}^{j_{0}}(l)$ is the $l$-step transition probability of the Markov
chain $\eta_{k}$.
\end{proof}
\begin{remark}
If one instead defines $q^i_j=P(\eta_{k+1}=j,\tau_{k+1}<\infty|\eta_{k}=i)$, the assertion of Lemma 1 also holds for $v>0$.
\end{remark}

Clearly, $\eta_n$ is irreducible and aperiodic.
Indeed, for any \(i,j\in S\), if \(\eta_k=i\), then at time \(\tau_k\) the original chain is in some state \((r,i)\), and hence
\[
q_{ij}\ge
\mathbb P\{\xi_{\tau_k+1}=(r-1,j)\mid \xi_{\tau_k}=(r,i)\}
=
p^{ri}_{r-1,j}>0.
\]
Thus all entries of $Q$ are positive. Consequently, the finite-state chain $\eta_k$ is irreducible and aperiodic.

Let $\nu=(\nu_{j},j=1,...,N)$ denote the stationary distribution
of the embedded Markov chain $\eta_{k}$.

\begin{claim}\label{th1}
Suppose that $v\leq0$. As $i_0\to\infty$,
\[
 h_{0j}^{i_0j_0}\longrightarrow\nu_j,\qquad j\in S,
\]
where $\nu=(\nu_j)_{j\in S}$ is the stationary distribution of the embedded chain.
\end{claim}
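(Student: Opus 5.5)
The plan is to reduce the statement to the convergence theorem for finite ergodic Markov chains, applied to the embedded chain $\eta_k$. By Lemma~\ref{l1}, for $v\le 0$ we have $h_{0j}^{i_0j_0}=(Q^{i_0})_{j_0j}$. The claim is therefore exactly that the $i_0$-step transition probabilities of $\eta_k$ converge to its stationary distribution $\nu$, uniformly in the starting phase $j_0$.

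The first step is to confirm that $Q$ is a genuine stochastic matrix when $v\le 0$. We need $\sum_j q_{ij}=P(\tau_1<\infty\mid \eta_0=i)=1$.
\begin{itemize}
\item For $v=0$ this was noted above, using the recurrence of the first coordinate together with jumps of size at most $1$.
\item For $v<0$ it follows from the Proposition: $\xi^1_n/n\to v<0$ almost surely, so $\xi^1_n\to-\infty$ almost surely. Since the first coordinate moves by at most $1$ per step, every lower level is hit, so $\tau_1<\infty$ almost surely.
\end{itemize}
Hence $Q$ is stochastic.

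Next we use the positivity observed before the statement: $q_{ij}\ge p^{ri}_{r-1,j}>0$ for all $i,j$. Set $\delta=\min_{i,j}q_{ij}>0$. Then $Q$ satisfies the Doeblin condition. The standard coupling or contraction argument gives
\[
\max_{j_0}\sum_j\bigl|(Q^{n})_{j_0j}-\nu_j\bigr|\le 2(1-N\delta)^{n}.
\]
To see this, write $Q=N\delta\,\mathbf 1 u^T+(1-N\delta)R$ with $u$ uniform and $R$ stochastic, and note that the Dobrushin coefficient of $Q$ is at most $1-N\delta$. Alternatively, we may simply invoke the ergodic theorem for finite irreducible aperiodic chains, as the paper has already established irreducibility and aperiodicity. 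Either route shows that $(Q^{i_0})_{j_0j}\to\nu_j$ as $i_0\to\infty$, and the convergence is geometric and uniform in $j_0$.

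The only real obstacle is the stochasticity of $Q$ in the case $v<0$. Lemma~\ref{l1} implicitly uses $P(\tau_k<\infty)=1$, and for $v<0$ this is not argued explicitly in the text. Deriving it from the almost-sure drift in the Proposition, together with the skip-free property of the first coordinate, fills this gap. Once it is in place, the rest is the classical finite-state convergence theorem.
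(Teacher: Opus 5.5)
Your proposal is correct and follows the paper's proof. Lemma~\ref{l1} gives $h_{0j}^{i_0j_0}=(Q^{i_0})_{j_0j}$, and the convergence theorem for the finite, irreducible, aperiodic chain with positive matrix $Q$ gives $Q^{i_0}\to\mathbf 1\nu^T$; your check that $Q$ is stochastic when $v<0$ (via Proposition~1 and the skip-free first coordinate) and the Doeblin rate $2(1-N\delta)^{i_0}$ fill in details the paper leaves implicit rather than changing the argument.
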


\begin{proof}
By Lemma~1,
\[
 h_{0j}^{i_0j_0}=(Q^{i_0})_{j_0j}.
\]
Since the finite-state chain with transition matrix $Q$ is irreducible and aperiodic,
$Q^n\to\mathbf 1\nu^T$ as $n\to\infty$. This proves the assertion.
\end{proof}
Set
\[
 \tau_A=\inf\{n\geq0:\xi_n\in A\}.
\]
For $i\in\mathbb Z_+$ and $j,l\in S$, introduce
\[
 h_{0l}^{ij}(n)
 :=P\{\tau_A=n,\,\xi_{\tau_A}=(0,l)\mid\xi_0=(i,j)\},
\]
that is, the probability that the first hit on $A$ occurs at time $n$ and
that the hitting point is $(0,l)$.

Consider the transition probabilities
\[
 p_{i,l}^{i+s,j}=P\bigl((i+s,j)\to(i,l)\bigr),\qquad s=-1,0,1.
\]
By translation invariance, these do not depend on $i$, so we may set
$p_{s,l}^j=p_{i,l}^{i+s,j}$ and
\[
 P_s=(p_{s,l}^j)_{j,l\in S},\qquad s=-1,0,1,
\]
and put $P_s=0$ for $|s|>1$. Then
\[
 \sum_{s\in\mathbb Z}P_s=P,
\]
where $P$ is the transition matrix of the induced chain, and
\[
 P(x)=xP_1+P_0+x^{-1}P_{-1},\qquad x\in\mathbb C\setminus\{0\}.
\]

Introduce the generating functions
\[
 h_{0l}^{ij}(z)=\sum_{n=0}^{\infty}h_{0l}^{ij}(n)z^n
\]
and the vectors
\[
 h_{0l}^{i}(z)=\bigl(h_{0l}^{ij}(z)\bigr)_{j\in S},\qquad i\in\mathbb Z_+.
\]
We have $h_{0l}^{0}(z)=e_l$. At $z=1$, the vector $h_{0l}^{i}:=h_{0l}^{i}(1)$
contains the probabilities of first hitting $(0,l)$ from the states $(i,j)$, $j\in S$.
Let $H$ be the matrix whose $l$-th column is $h_{0l}^{1}$.
By Lemma~1, the matrices $H$ and $Q$ coincide when $v\leq0$.

Consider the generating function of the sequence $\{h_{0l}^{i}\}_{i\geq1}$:
\[
 H_{0l}(x)=\sum_{i=1}^{\infty}x^{i-1}h_{0l}^{i}.
\]

\begin{theorem}\label{the1}
For every $l\in S$,
\begin{gather}
\label{H}
 H_{0l}(x)=(xI-xP(x))^{-1}\bigl(xP_1e_l-P_{-1}h_{0l}^{1}\bigr),
\end{gather}
at all points where the matrix $xI-xP(x)$ is invertible. At points of the unit disk
where the right-hand side has a removable singularity, the equality is understood by analytic
continuation. The vector $h_{0l}^{1}$ can be computed by the finite spectral algorithm
in Section \ref{subsec:label}.
\end{theorem}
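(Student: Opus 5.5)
Our plan is to derive a second-order linear recurrence in $i$ for the vectors $h_{0l}^i=h_{0l}^i(1)$ by a first-step decomposition, and then to turn that recurrence into an algebraic equation for $H_{0l}(x)$.

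\emph{First-step equations.} Fix $l\in S$. For $i\ge1$ the starting point $(i,j)$ lies outside $A$. Conditioning on the first step and using the Markov property and translation invariance (\ref{hom}), we obtain
\[
h_{0l}^{ij}=\sum_{s\in\{-1,0,1\}}\sum_{k\in S}p^{j}_{s,k}\,h_{0l}^{i+s,k}.
\]
In this sum, $s=+1$ moves to level $i-1$, and $s=-1$ moves to level $i+1$, following the paper's convention $p_{s,l}^j=P((i+s,j)\to(i,l))$. In vector form this reads
\[
h^{i}_{0l}=P_{-1}h^{i+1}_{0l}+P_0h^{i}_{0l}+P_1h^{i-1}_{0l},\qquad i\ge1,
\]
with boundary value $h^0_{0l}=e_l$. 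Since all hitting probabilities lie in $[0,1]$, the series $H_{0l}(x)$ converges for $|x|<1$, and all the manipulations below are legitimate there.

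\emph{Generating function.} Multiply the $i$-th equation by $x^{i}$ and sum over $i\ge1$. The left side gives $xH_{0l}(x)$. For the right side:
\begin{itemize}
\item the $P_0$ term gives $xP_0H_{0l}(x)$;
\item the $P_1$ term gives $x\sum_{i\ge1}x^{i-1}h^{i-1}_{0l}=x\bigl(e_l+xH_{0l}(x)\bigr)$, multiplied by $P_1$;
\item the $P_{-1}$ term gives $\sum_{i\ge1}x^ih^{i+1}_{0l}=H_{0l}(x)-h^1_{0l}$, multiplied by $P_{-1}$.
\end{itemize}
Collecting terms yields
\[
\bigl(xI-x^2P_1-xP_0-P_{-1}\bigr)H_{0l}(x)=xP_1e_l-P_{-1}h^1_{0l}.
\]
The matrix on the left equals $xI-xP(x)$, because $xP(x)=x^2P_1+xP_0+P_{-1}$. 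Inverting wherever this matrix is nonsingular gives (\ref{H}).

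\emph{Extension to singular points.} Both sides are analytic on the punctured unit disk away from the zeros of $\det(xI-xP(x))$, and the left side is analytic on the whole disk. Hence any singularity of the right side inside the disk must be removable, and the identity extends by analytic continuation. The remark on computing $h^1_{0l}$ is deferred to Section~\ref{subsec:label}: that vector is exactly the unknown which must be fixed by requiring the numerator to vanish at the roots of $\det(xI-xP(x))$ in the disk.

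The main care point is bookkeeping of the index convention for $P_{\pm1}$, i.e.\ which one shifts toward level $0$. I will check it against $xP(x)=x^2P_1+xP_0+P_{-1}$ so that the signs match the stated numerator $xP_1e_l-P_{-1}h^1_{0l}$. Convergence itself is not an issue, since the coefficients are bounded vectors.
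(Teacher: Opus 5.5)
Your proof is correct and is essentially the paper's argument: a first-step decomposition gives $h^{i}_{0l}=P_{-1}h^{i+1}_{0l}+P_0h^{i}_{0l}+P_1h^{i-1}_{0l}$ for $i\ge1$, summing against $x^{i}$ with $xP(x)=x^2P_1+xP_0+P_{-1}$ gives the identity, and analyticity of $H_{0l}$ on the disk makes the singularities there removable. The only difference is that the paper carries a time variable $z$, namely generating functions of $h^{ij}_{0l}(n)$ leading to $(xI-zxP(x))H_0(x,z)=z(xP_1e_l-P_{-1}h^1_{0l}(z))$, and sets $z=1$ only at the end, whereas you work directly with the hitting probabilities, which is all the statement needs.
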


The stationary probabilities can be expressed as follows:

\begin{theorem} \label{th2}
If $v<0$, then
\begin{equation}
\label{theor2}
\nu_l=-\frac{\pi^{T} (P_1e_l-P_{-1}h^1_{0l})}{v}, \quad \forall l \in \{1,...N\},
\end{equation}
where $\pi$ is the stationary distribution vector of the induced Markov chain and $e_l$ is the vector whose
$l$-th component is $1$ and all remaining components are $0$.
\end{theorem}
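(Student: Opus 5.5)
The plan is to start from the generating-function identity of Theorem~\ref{the1}, multiply it on the left by $\pi^T$, and let $x\to1^-$ along the real axis. Rewriting \eqref{H} gives
\[
 x\bigl(I-P(x)\bigr)H_{0l}(x)=xP_1e_l-P_{-1}h^1_{0l}.
\]
This holds for real $x\in(0,1)$. There the series $H_{0l}(x)$ converges, since $\|h^i_{0l}\|\le1$, and the identity is an identity of power series, or follows by the analytic continuation clause of Theorem~\ref{the1}. If needed, I would rederive it directly from the first-step recursion $h^i_{0l}=P_1h^{i-1}_{0l}+P_0h^i_{0l}+P_{-1}h^{i+1}_{0l}$, $i\ge1$, summed against $x^{i}$.

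\textbf{Step 1 (killing the singular factor).} Since $\pi^T P=\pi^T$ and $P=P_1+P_0+P_{-1}$,
\[
 \pi^T\bigl(I-P(x)\bigr)=\pi^T\bigl(P-P(x)\bigr)=(1-x)\,\pi^T\bigl(P_1-x^{-1}P_{-1}\bigr).
\]
So after multiplying by $\pi^T$ the identity becomes
\[
 x\,\pi^T\bigl(P_1-x^{-1}P_{-1}\bigr)\,(1-x)H_{0l}(x)=\pi^T\bigl(xP_1e_l-P_{-1}h^1_{0l}\bigr).
\]

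\textbf{Step 2 (Abelian limit).} By Lemma~\ref{l1}, $h^i_{0l}=Q^ie_l$. Because all entries of $Q$ are positive, $Q^i\to\mathbf1\nu^T$ geometrically fast, so $h^i_{0l}\to\nu_l\mathbf1$. A standard Abel argument then gives
\[
 (1-x)H_{0l}(x)=(1-x)\sum_{i\ge1}x^{i-1}h^i_{0l}\to\nu_l\mathbf1 \quad\text{as } x\to1^-.
\]
Passing to the limit in the identity of Step~1 yields
\[
 \nu_l\,\pi^T(P_1-P_{-1})\mathbf1=\pi^T\bigl(P_1e_l-P_{-1}h^1_{0l}\bigr).
\]

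\textbf{Step 3 (identifying the drift).} The sign is the main point of care. By the definition $p^j_{s,l}=P((i+s,j)\to(i,l))$, the matrix $P_1$ describes a step \emph{down} by one in the first coordinate and $P_{-1}$ a step \emph{up}. Hence
\[
 m_j=\bigl((P_{-1}-P_1)\mathbf1\bigr)_j ,\qquad \pi^T(P_1-P_{-1})\mathbf1=-v .
\]
Since $v<0$, this quantity is nonzero, and dividing gives \eqref{theor2}.

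The only step I expect to be delicate is justifying the limit $x\to1^-$. Specifically, the identity must hold on $(0,1)$ even where $I-P(x)$ is singular, and the interchange in the Abel limit must be valid. Both are handled by working with the power-series form of the identity and the uniform geometric convergence of $Q^i$.
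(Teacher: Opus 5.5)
Your proof is correct, and it takes a different and more elementary route than the paper.

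\textbf{The paper's route.} The paper inverts $F(x)=x(I-P(x))$ near $x=1$. Lemma~7, via an analytic eigenvalue branch $\lambda(x)$ with $\lambda'(1)=-v\neq0$, shows that $F^{-1}$ has a simple pole at $1$. The residue $A=\mathbf1\pi^T/v$ is then found by matching Laurent coefficients in $F^{-1}F=I$. Only after that is Abel's theorem applied to $(1-x)F^{-1}(x)v_l(x)$.

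\textbf{Your route.} You never invert anything. You keep the series identity $x(I-P(x))H_{0l}(x)=xP_1e_l-P_{-1}h^1_{0l}$, which holds on all of $(0,1)$. This matters, because $\det F$ can vanish inside $(0,1)$: in the paper's own example it vanishes at $x=p_+/p_-$. The single identity $\pi^T(I-P(x))=(1-x)\,\pi^T(P_1-x^{-1}P_{-1})$ then exposes exactly the factor $(1-x)$ that the Abel limit needs. In effect you compute only the one projection $\pi^T A$ of the residue that the formula uses. This removes any need for Lemma~7, analytic perturbation theory, pole orders, or invertibility of $F$ near $1$. Your sign bookkeeping in Step~3 agrees with the paper's $m=(P_{-1}-P_1)\mathbf1$. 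Geometric convergence of $Q^i$ is not needed; plain convergence $h^i_{0l}\to\nu_l\mathbf1$ suffices for Abel.

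\textbf{What the paper's route buys.} It gives structural information that pays off in Theorem~3. When $v=0$, your identity yields only $\pi^T(P_1e_l-P_{-1}h^1_{0l})=0$. That is the cancellation \eqref{eq:cancel}, which you get for free without the paper's growth argument, but it does not determine $\nu_l$. To reach the next order you must replace the constant left vector $\pi$ by the analytic left eigenvector $\ell(x)$, using $\ell(x)^T(I-P(x))=(1-\lambda(x))\,\ell(x)^T$. That is exactly the paper's spectral machinery; your argument is its zeroth-order, constant-vector special case, and for $v<0$ it is all that is needed.
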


It turns out that an explicit formula for $\nu$ can also be given when $v=0$.

\begin{theorem} \label{th3} 
If $v=0$, then
\begin{equation}\label{theor3}
\nu_l=\frac{2}{a}
\Bigl[
\pi^T P_1e_l
+b^T\bigl(P_1e_l-P_{-1}h^1_{0l}\bigr)
\Bigr],\qquad l\in S,
\end{equation}
where $\mathbf1=(1,\ldots,1)^T$,
\[
Z=(I-P+\mathbf1\circ\pi)^{-1}-\mathbf1\circ\pi,
\qquad
b^T=\pi^T(P_1-P_{-1})Z,
\]
and the constant $a$ is given by
\[
a=2\pi^TP_{-1}\mathbf1
  +2\pi^T(P_1-P_{-1})Z(P_1-P_{-1})\mathbf1.
\]
Here $\mathbf1\circ\pi=\mathbf1\pi^T$; the identities for $a$ and $b^T$
are derived below in the proof of the theorem.

\end{theorem}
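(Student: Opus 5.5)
The plan is to extract $\nu_l$ from the behaviour of the generating function $H_{0l}(x)$ of Theorem~\ref{the1} as $x\to1$. For $v=0$ the first-order expansion only gives the consistency relation $0=0$, so it has to be carried to second order. Fix $l$ and write $c(x)=xP_1e_l-P_{-1}h^1_{0l}$, $D=P_1-P_{-1}$ and $u(x)=(1-x)H_{0l}(x)$.

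\textbf{Step 1: regularity of $u$ at $x=1$.} By Lemma~\ref{l1} applied with starting level $i$, we have $h^i_{0l}=Q^ie_l$. Since all entries of $Q$ are positive, $Q^i=\mathbf1\nu^T+O(\rho^i)$ for some $\rho<1$. Hence
\[
u(x)=\nu_l\mathbf1+(1-x)\sum_{i\ge1}x^{i-1}\bigl(h^i_{0l}-\nu_l\mathbf1\bigr)
\]
is analytic in a disk of radius larger than $1$. So $u(x)=\nu_l\mathbf1+(1-x)w+O((1-x)^2)$ with $w=\sum_{i\ge1}(h^i_{0l}-\nu_l\mathbf1)$. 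This also legitimizes the analytic-continuation reading of \eqref{H} near $x=1$. I expect this justification to be the main technical point; everything after it is algebra.

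\textbf{Step 2: the equation for $u$.} Theorem~\ref{the1} gives $x(I-P(x))H_{0l}(x)=c(x)$. Using
\[
P-P(x)=(1-x)\bigl(P_1-x^{-1}P_{-1}\bigr),
\]
this becomes
\[
(I-P)u(x)=\frac{1-x}{x}\,c(x)-(1-x)\bigl(P_1-x^{-1}P_{-1}\bigr)u(x).
\]
Dividing by $1-x$ and letting $x\to1$ yields
\[
(I-P)w=c(1)-\nu_l D\mathbf1 .
\]
Since $\pi^T$ annihilates $I-P$, solvability requires $\pi^Tc(1)=\nu_l v=0$, which is consistent. Since $Z$ is the fundamental-matrix pseudo-inverse of $I-P$, we get
\[
w=(\pi^Tw)\mathbf1+Z\bigl(c(1)-\nu_lD\mathbf1\bigr).
\]

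\textbf{Step 3: second-order identity.} Left-multiplying the equation of Theorem~\ref{the1} by $\pi^T$ and using $\pi^T(I-P(x))=(1-x)\pi^T(P_1-x^{-1}P_{-1})$ gives the exact identity
\[
\pi^T(xP_1-P_{-1})\,u(x)=\pi^Tc(x).
\]
At $x=1$ both sides equal $0$ because $v=\pi^TD\mathbf1=0$. Differentiating at $x=1$ and using $u'(1)=-w$ gives
\[
\nu_l\,\pi^TP_1\mathbf1-\pi^TDw=\pi^TP_1e_l .
\]
Substituting $w$ from Step 2 and using $\pi^TD\mathbf1=0$ removes the unknown $\pi^Tw$, so that $\pi^TDw=b^Tc(1)-\nu_l\,b^TD\mathbf1$. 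Therefore
\[
\nu_l\bigl(\pi^TP_1\mathbf1+\pi^TDZD\mathbf1\bigr)=\pi^TP_1e_l+b^T\bigl(P_1e_l-P_{-1}h^1_{0l}\bigr).
\]

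\textbf{Step 4: identifying the constant.} The condition $v=0$ gives $\pi^TP_1\mathbf1=\pi^TP_{-1}\mathbf1$. Hence the coefficient of $\nu_l$ equals $a/2$, and \eqref{theor3} follows.
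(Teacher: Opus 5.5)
Your argument is correct except for one missing justification, described in the second paragraph. It takes a genuinely different route from the paper.

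The paper works with $F^{-1}(x)$. It uses:
\begin{itemize}
\item the spectral decomposition $(I-P(x))^{-1}=\Pi(x)/(1-\lambda(x))+R(x)$ of Lemma~9;
\item the facts $\lambda'(1)=0\neq\lambda''(1)$ from Lemma~7;
\item Abel's theorem and the bound $0\le H_{0l}(x)\le\mathbf1/(1-x)$, to rule out a double pole.
\end{itemize}
It then obtains $b^T=\ell'(1)^T$ and $a=\lambda''(1)$ by differentiating the left and right eigenvector equations.

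You work with $H_{0l}$ itself. The geometric convergence $Q^i\to\mathbf1\nu^T$ makes $(1-x)H_{0l}(x)$ analytic across $x=1$. Expanding the functional equation to first order (solved with the group inverse $Z$) and differentiating its $\pi^T$-projection then gives a scalar linear equation for $\nu_l$. This avoids perturbation theory, the eigenvalue branch and the pole-order lemma. The cancellation $\pi^T(P_1e_l-P_{-1}h^1_{0l})=0$ also appears as a solvability condition rather than through a growth contradiction. There is a minor sign slip: $v=\pi^T(P_{-1}-P_1)\mathbf1=-\pi^TD\mathbf1$, which is harmless since $v=0$.

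The gap is at the very end. You obtain $\nu_l\,a/2=\pi^TP_1e_l+b^T(P_1e_l-P_{-1}h^1_{0l})$ and divide by $a$, but nothing in your proposal shows $a\neq0$. Summing over $l$ only returns $a/2=a/2$. In the paper this is automatic, because $a$ is identified with $\lambda''(1)$, which is nonzero by Lemma~7 (ultimately, the exact double root of $\det G$ at $1$ from Lemma~4). Since your route never introduces $\lambda(x)$, you must supply this step separately, in one of two ways.

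\begin{itemize}
\item Quote the paper's identification $a=\lambda''(1)$ together with Lemma~7.
\item Argue directly. Let $m=(P_{-1}-P_1)\mathbf1$ and $\phi=Zm$, so that $(I-P)\phi=m$ because $\pi^Tm=v=0$. Put $\Delta_0=\xi^1_1-\xi^1_0$ and $\delta=\Delta_0+\phi(\xi^2_1)-\phi(\xi^2_0)$, with $\xi^2_0\sim\pi$. A short computation using stationarity and $v=0$ gives $E_\pi\delta^2=2\pi^TP_{-1}\mathbf1+2\pi^TDZD\mathbf1=a$; the cross terms $\pm2\sum_j\pi_jm_j\phi_j$ cancel. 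Moreover $\delta$ is not a.s.\ zero: given $\xi^2_0=j$ and $\xi^2_1=k$, both $\Delta_0=1$ and $\Delta_0=-1$ have positive probability by the positivity of $P_{\pm1}$. Hence $a>0$.
\end{itemize}

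With either addition your conclusion follows.
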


\begin{remark}
Theorems 2 and 3 provide explicit expressions for the stationary distribution of the embedded chain in terms of the transition matrices of the induced chain and the mean drift in the first coordinate.
\end{remark}

\section{Proofs of the main results} \label{s3}
Before proving the main results, we establish several auxiliary lemmas.
For $x\neq0$, define
\[
D(x)=\det(I-P(x)).
\]
Since \(P(x)\) contains \(x^{-1}\), the function \(D(x)\) is initially defined only for \(x\neq0\). Therefore we introduce separately the matrix polynomial
\begin{equation}
\label{xi-px}
F(x):=-x^{2}P_{1}+x(I-P_{0})-P_{-1},\qquad x\in\mathbb C.
\end{equation}
For $x\neq0$, we have
\[
F(x)=x(I-P(x))=xI-xP(x),
\qquad
D(x)=\frac{\det F(x)}{x^N}.
\]
Thus $F$ is also defined at $x=0$, and $\det F(x)$ is a polynomial
of degree at most $2N$. Note also that $D(1)=0$.

Let $H$ be the matrix whose $l$-th column is $h_{0l}^{1}$, $l\in S$.
By Lemma~1, its powers describe successive descents through the levels: the $l$-th column
of $H^n$ is $h_{0l}^{n}$. Conditioning on the first step from $(1,j)$ gives
\[
 h_{0l}^{1j}=p_{1,l}^{j}
 +\sum_{k=1}^{N}p_{0,k}^{j}h_{0l}^{1k}
 +\sum_{k=1}^{N}p_{-1,k}^{j}h_{0l}^{2k}.
\]
Consequently,
\begin{equation}
\label{h_eq}
 H=P_{-1}H^2+P_0H+P_1.
\end{equation}

\begin{lemma}
For every nonnegative solution $X$ of equation \eqref{h_eq},
we have $H\leq X$ entrywise. In particular, $H$ is the minimal
nonnegative solution. If $H$ is stochastic, it is the
unique stochastic solution of equation \eqref{h_eq}.
\end{lemma}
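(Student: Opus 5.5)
The plan is the standard minimal-solution argument for first-passage quantities, via an iteration that approximates $H$ by time-truncated hitting probabilities. For $n\ge 0$, let $H^{(n)}$ be the matrix with entries
\[
(H^{(n)})_{jl}=P\{\tau_1\le n,\ \xi^2_{\tau_1}=l\mid \xi_0=(1,j)\},
\]
the probability of descending from level $1$ to level $0$ within $n$ steps and landing in phase $l$. Then $H^{(0)}=0$, and $H^{(n)}\uparrow H$ entrywise by monotone convergence. (Here $H$ is the matrix of $h_{0l}^{1j}$, which equals $P(\tau_1<\infty,\xi^2_{\tau_1}=l)$ in general.)

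The first step is to derive a recursive inequality from the first-step decomposition used to obtain \eqref{h_eq}. From level $1$, a step to level $0$ gives $P_1$. A step staying at level $1$ leaves $n-1$ steps, giving $P_0H^{(n-1)}$. A step up to level $2$ requires descending twice, with total time at most $n-1$. By the strong Markov property and translation invariance, the probability of two successive descents using times $k_1+k_2\le n-1$ is bounded by the corresponding event with $k_1\le n-1$ and $k_2\le n-1$. This yields
\[
H^{(n)}\le P_{-1}\bigl(H^{(n-1)}\bigr)^2+P_0H^{(n-1)}+P_1 .
\]

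The second step is an induction comparing with $X$. Write $\Phi(Y)=P_{-1}Y^2+P_0Y+P_1$, which is monotone on nonnegative matrices because all the $P_s$ are nonnegative. Given a nonnegative solution $X=\Phi(X)$, we have $H^{(0)}=0\le X$. If $H^{(n-1)}\le X$, then
\[
H^{(n)}\le \Phi(H^{(n-1)})\le \Phi(X)=X .
\]
Letting $n\to\infty$ gives $H\le X$. Since $H$ itself solves \eqref{h_eq}, it is the minimal nonnegative solution. The one delicate point is justifying the truncated inequality: the "two descents" term must be handled carefully via the strong Markov property at the first hitting time of level $1$.

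For uniqueness, suppose $H$ is stochastic and $X$ is a stochastic solution. Then $X-H\ge 0$ and
\[
(X-H)\mathbf 1=\mathbf 1-\mathbf 1=0 .
\]
A nonnegative matrix annihilating $\mathbf 1$ is zero, so $X=H$.
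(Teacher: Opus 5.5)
Your proof is correct, but it takes a different route from the paper.

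\textbf{The paper's route.} The paper never truncates in time. It turns an arbitrary nonnegative solution $X$ into the function $u_l(i,j)=(X^i)_{jl}$. Multiplying \eqref{h_eq} on the right by $X^{i-1}$ shows that $u_l$ is harmonic for the walk away from level $0$, with boundary values $\delta_{jl}$. The paper then uses the bounded-time stopped martingale $u_l(\xi_{m\wedge\tau})$. Since $u_l\ge0$, this gives $(X^i)_{jl}\ge P_{(i,j)}\{\tau\le n,\ \xi_\tau=(0,l)\}$, and letting $n\to\infty$ yields $X\ge H$.

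\textbf{Your route.} You compare $X$ with the time-truncated first-passage matrices $H^{(n)}$ through the monotone map $\Phi$. This needs the strong Markov property at the first hitting of level $1$ from level $2$, together with translation invariance, to obtain $H^{(n)}\le\Phi(H^{(n-1)})$. Your justification of that step is sound: the event $\{k_1+k_2\le n-1\}$ lies inside $\{k_1\le n-1,\ k_2\le n-1\}$, whose probability factorizes as $\sum_m (H^{(n-1)})_{km}(H^{(n-1)})_{ml}$.

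\textbf{What each buys.} The paper's argument is the general ``minimal nonnegative solution of a Dirichlet problem'' principle. Its minimality part uses only the one-step Markov property plus the algebraic identity $X^i=P_{-1}X^{i+1}+P_0X^i+P_1X^{i-1}$. Yours is closer to the classical quasi-birth--death treatment of the first-passage matrix, and it gives a by-product for free. Since
\[
0\le H^{(n)}\le\Phi^n(0)\le\Phi^n(H)=H
\]
and $H^{(n)}\uparrow H$, the iteration $Y_0=0$, $Y_n=\Phi(Y_{n-1})$ increases monotonically to $H$. That is a simple numerical alternative to the paper's spectral algorithm for computing $H$. The uniqueness argument for stochastic solutions is identical in both.
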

\begin{proof}
Let $X$ be any nonnegative solution of \eqref{h_eq}. For $i\geq0$ and
$l\in S$, set
\[
 u_l(i,j)=(X^i)_{jl},\qquad j\in S,
\]
where $X^0=I$, so that $u_l(0,j)=\delta_{jl}$. Multiplying \eqref{h_eq} on the right by
$X^{i-1}$, $i\geq1$, gives
\[
 X^i=P_{-1}X^{i+1}+P_0X^i+P_1X^{i-1}.
\]
Therefore, for every $i\geq1$ and $j\in S$,
\[
\begin{aligned}
 u_l(i,j)=&\sum_{k=1}^{N}p_{1,k}^{j}u_l(i-1,k)
 +\sum_{k=1}^{N}p_{0,k}^{j}u_l(i,k)+\sum_{k=1}^{N}p_{-1,k}^{j}u_l(i+1,k).
\end{aligned}
\]
It is now easy to see that, for \(i\ge1\),
\[
u_l(i,j)
=
E_{(i,j)}\bigl[u_l(\xi_1^1,\xi_1^2)\bigr],
\]while at level \(0\),
\[
u_l(0,j)=\delta_{jl}.
\]

Let $P_{(i,j)}$ denote the law of the chain starting from $\xi_0=(i,j)$, and let
$\tau=\inf\{n\geq0:\xi_n\in A\}$. For each fixed $n$, define
\[
 M_m=u_l\bigl(\xi^1_{m\wedge\tau},\xi^2_{m\wedge\tau}\bigr),
 \qquad 0\leq m\leq n.
\]
Then $(M_m)_{m=0}^{n}$ is a martingale. Indeed, for $0\leq m\leq n$, when starting from $(i,j)$,
the first coordinate of the stopped chain can take values only in
the finite set $\{0,1,\ldots,i+n\}$, since it changes by
at most one at each step. Hence $M_m$ is integrable:
the function $u_l$ is bounded on this finite set of states.
Moreover, $M_m$ is measurable with respect to $\mathcal F_m$.
For $m<n$, on the event $\{\tau\leq m\}$, we have $M_{m+1}=M_m$.
On the event $\{\tau>m\}$, the Markov property and the identity
$u_l(i,j)=E_{(i,j)}[u_l(\xi_1)]$ for $i\geq1$ yield
\[
 E_{(i,j)}[M_{m+1}\mid\mathcal F_m]
 =u_l(\xi_m^1,\xi_m^2)=M_m.
\]
Consequently, $E_{(i,j)}[M_{m+1}\mid\mathcal F_m]=M_m$
for $0\leq m<n$.
The martingale property implies that
\[
 u_l(i,j)=E_{(i,j)}M_n
 \geq P_{(i,j)}\{\tau\leq n,\,\xi_\tau=(0,l)\}.
\]
Taking the limit as $n\to\infty$ and using continuity of probability from below, we obtain
\[
 u_l(i,j)\geq P_{(i,j)}\{\tau<\infty,\,\xi_\tau=(0,l)\}=h_{0l}^{ij}.
\]
At $i=1$, this means $X\geq H$ entrywise. Thus $H$ is the minimal
nonnegative solution. If both $H$ and $X$ are stochastic, then $H\leq X$ and the sums
of the entries in each of their rows equal one, implying $H=X$. The lemma is proved.
\end{proof}
Consider the principal matrix polynomial
\[
G(x) = P_{-1}x^2 + (P_0-I)x +P_1, \quad x \in \mathbb{C}. 
\]
The polynomial $F$ introduced above is an auxiliary polynomial; for $x\neq0$
it is related to $G$ by $F(x)=-x^2G(x^{-1})$.

Note that $G(H)=0$. Moreover, for $x\neq0$,
\[
G(x)=x^2\bigl(x^{-1}P(x^{-1})-x^{-1}I\bigr)
\]
and therefore
\[
\det G(x)=x^{2N}\det\bigl(x^{-1}P(x^{-1})-x^{-1}I\bigr),\qquad x\neq0.
\]
Equivalently, for $\xi\neq0$,
\begin{equation}
\label{eq1}
\xi^{2N}\det G(\xi^{-1})=\det\bigl(\xi P(\xi)-\xi I\bigr).
\end{equation}

\begin{lemma}
Every eigenvalue $\lambda$ of $H$ is a root of
$\det G(x)=0$. All $N$ eigenvalues of $H$,
counted with algebraic multiplicity, satisfy $|\lambda|\leq1$.
\end{lemma}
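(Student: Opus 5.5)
The plan is to use the matrix equation \eqref{h_eq}, which says $G(H)=P_{-1}H^2+(P_0-I)H+P_1=0$, together with the fact that $H$ is a nonnegative substochastic matrix. Both parts of the lemma then follow quickly.

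\emph{Eigenvalues of $H$ are roots of $\det G$.} Let $\lambda$ be an eigenvalue of $H$ and $u\neq0$ a corresponding right eigenvector, so $Hu=\lambda u$. Then $H^2u=\lambda^2u$. Applying $G(H)=0$ to $u$ gives
\[
0=G(H)u=P_{-1}H^2u+(P_0-I)Hu+P_1u=\bigl(\lambda^2P_{-1}+\lambda(P_0-I)+P_1\bigr)u=G(\lambda)u .
\]
Since $u\neq0$, the matrix $G(\lambda)$ is singular, that is, $\det G(\lambda)=0$.

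This argument only shows that each distinct eigenvalue is a root. It does not say that algebraic multiplicities of $H$ are dominated by root multiplicities of $\det G$. The statement does not require that, so I would not pursue it. If it were needed, one would use the standard factorization $G(x)=(xP_{-1}+P_{-1}H+P_0-I)(xI-H)$. This identity holds because $G(H)=0$ allows right division of the matrix polynomial by $xI-H$. It gives $\det(xI-H)\mid\det G(x)$ as polynomials.

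\emph{Bound $|\lambda|\le1$.} The entries of $H$ are the hitting probabilities $h^{1j}_{0l}$. These are nonnegative, and the $j$-th row sums to $P_{(1,j)}(\tau_A<\infty)\le1$. So $H$ is substochastic and $\|H\|_\infty\le1$ in the maximum-row-sum norm. The spectral radius is bounded by any induced norm, so every eigenvalue satisfies $|\lambda|\le\rho(H)\le\|H\|_\infty\le1$. Directly: take the component of $u$ with maximal modulus, $|u_k|=\max_j|u_j|$. Then
\[
|\lambda||u_k|\le\sum_l h^{1k}_{0l}|u_l|\le|u_k|.
\]
Since $H$ is an $N\times N$ matrix, it has exactly $N$ eigenvalues counted with algebraic multiplicity, and all of them obey this bound.

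There is no serious obstacle in this argument. The only point needing care is that $H$ is substochastic for every sign of $v$, which is immediate from its probabilistic definition. When $v\le0$, $H=Q$ is in fact stochastic, so $\lambda=1$ is attained.
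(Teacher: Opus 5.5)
Your proof is correct, and it differs from the paper's only in the first assertion. The paper invokes the generalized B\'ezout theorem to write $G(\lambda)=M(\lambda)(\lambda I-H)$ and reads off $\det(\lambda I-H)=0\Rightarrow\det G(\lambda)=0$. You instead apply the right evaluation $G(H)=P_{-1}H^2+(P_0-I)H+P_1=0$ to an eigenvector and get $G(\lambda)u=0$ directly. For the bound $|\lambda|\le1$ the paper only says it is immediate, and your substochasticity argument, valid for every sign of $v$, supplies exactly the missing justification.

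Your eigenvector route is more elementary, but as you observe it yields only the set inclusion of eigenvalues. The factorization route gives $\det G(x)=\det M(x)\det(xI-H)$, i.e.\ $\det(xI-H)\mid\det G(x)$ with multiplicities. You wrote this factorization down explicitly and checked it, with $M(x)=xP_{-1}+P_{-1}H+P_0-I$. The stronger form is what the paper relies on afterwards: in Lemma~5, and in the algorithm for computing $H$, eigenvalues of $H$ in the open unit disk are matched with roots of $\det G$ counted with multiplicity. So the factorization belongs in the proof itself rather than in a side remark, even though the lemma as stated does not need it.
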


\begin{proof} The second assertion is immediate, while the first follows from the generalized B\'ezout theorem (see Chapter~4, Section~2 of \cite{base1}). Indeed, by the generalized B\'ezout theorem, $G(\lambda)$ has the form
$$
G(\lambda)=M(\lambda)(\lambda I-H),
$$
where $M(\lambda)$ is a matrix polynomial. Consequently,
$$
det(\lambda I-H)=0 \Rightarrow detG(\lambda)=0,
$$
which proves the assertion.
\end{proof}

\begin{lemma}
All roots below are counted with algebraic
multiplicity. Then
\begin{enumerate}
    \item \( v < 0 \iff \) the equation \(\det G(x) = 0\) has \(N-1\) roots in the open disk \(|x|<1\) and a simple root at \(x=1\); equivalently, exactly \(N\) roots lie in the closed disk \(|x|\leq1\).
    \item \( v > 0 \iff \) the equation \(\det G(x) = 0\) has \(N\) roots in the open disk \(|x|<1\), while \(x=1\) is a simple root on the boundary.
    \item $v=0 \iff$ the equation $\det G(x) = 0$ has $N-1$ roots in the open disk $|x|<1$ and a double root at $x=1$.
\end{enumerate}
In all three cases, there are no other roots on the unit circle $|x|=1$.
\end{lemma}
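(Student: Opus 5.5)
The plan is to count zeros of $\det G$ by a homotopy from a trivial polynomial, using a Rouché-type matrix argument. First rewrite
\[
G(x)=x\bigl(xP_{-1}+P_0+x^{-1}P_1\bigr)-xI=A(x)-xI,\qquad A(x):=x^2P_{-1}+xP_0+P_1 .
\]
Since $p_{s,l}^j$ is the probability of moving from level $i+s$ to level $i$, $P_{-1}$ carries the upward jumps and $P_1$ the downward ones. Hence
\[
v=\pi^T(P_{-1}-P_1)\mathbf 1 .
\]
For $x>0$, $A(x)$ is a positive matrix; let $\rho(x)$ be its Perron root. Then $\rho(1)=1$, because $A(1)=P$ is stochastic. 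Normalizing the left and right Perron vectors at $x=1$ as $\pi$ and $\mathbf 1$, first-order perturbation gives
\[
\rho'(1)=\pi^T A'(1)\mathbf 1=\pi^T(2P_{-1}+P_0)\mathbf 1=1+v .
\]
Moreover $\rho$ is log-convex on $(0,\infty)$ (Kingman's theorem), since the entries of $A$ are positive combinations of powers of $x$.

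\textbf{Step 1 (no roots on the unit circle except $x=1$).} For $|x|=1$, $x\ne1$, we have $|A(x)|\le P$ entrywise. The inequality is strict in every entry, because each entry of $A(x)$ combines the three positive terms $x^2$, $x$, $1$, which do not all have the same phase. So the spectral radius of $A(x)$ is $<1=|x|$ by Perron–Frobenius comparison (Wielandt). Hence $\det(A(x)-xI)\ne0$.

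\textbf{Step 2 (homotopy).} For $t\in[0,1)$ set $G_t(x)=tA(x)-xI$. On $|x|=1$ we have $\|tA(x)\|_\infty\le t<1$, so $G_t$ has no zeros on the unit circle. At $t=0$, $\det G_0=(-x)^N$ has exactly $N$ zeros in the open disk. The zeros of $\det G_t$ depend continuously on $t$ and cannot cross $|x|=1$. Therefore $\det G_t$ has exactly $N$ zeros in $|x|<1$ for every $t<1$. Letting $t\uparrow1$, these $N$ zeros accumulate in the closed disk. By Step 1, any zero that reaches the boundary must tend to $x=1$.

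\textbf{Step 3 (local analysis at $x=1$; the main obstacle).} Near $x=1$ all non-Perron eigenvalues of $tA(x)$ stay bounded away from $x$, since at $x=1$ they have modulus $<1$. So, near $x=1$, the zeros of $\det G_t$ are exactly the solutions of $t\rho(x)=x$, with $\rho$ analytically continued. The multiplicity of $x=1$ as a zero of $\det G$ equals the order of vanishing of $\rho(x)-x$ at $1$, whose derivative there is $v$. Thus $x=1$ is a simple zero when $v\ne0$, and at least double when $v=0$. In the case $v=0$, strict convexity of $\log\rho$ forces $\rho''(1)>0$, so the zero is exactly double.

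The branch through $x=1$ satisfies
\[
\frac{dx}{dt}=\frac{\rho(x)}{1-t\rho'(x)} .
\]
\emph{Case $v<0$.} Here $\rho'(1)<1$, so $dx/dt>0$ near $t=1$. The real root approaches $1$ from inside, so one of the $N$ interior zeros moves to $x=1$: we get $N-1$ zeros in the open disk plus a simple zero at $1$.

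\emph{Case $v>0$.} Here $\rho'(1)>1$, so the branch approaches $1$ from outside. All $N$ interior zeros stay inside, and $x=1$ is an additional simple zero on the boundary.

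\emph{Case $v=0$.} The convex function $\rho(x)-x$ has a double zero at $1$. For $t<1$, the equation $t\rho(x)=x$ has two real roots near $1$, one on each side. Hence one interior zero goes to $1$ and one exterior zero also goes to $1$. This gives $N-1$ zeros in the open disk and a double zero at $1$.

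In every case the remaining zeros stay off the unit circle by Step 1. Since the three cases on $v$ are exhaustive and the three root configurations are mutually exclusive, the equivalences follow.

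The delicate points are the justification that only the Perron branch matters near $x=1$, and the exact multiplicity $2$ when $v=0$. For the latter I would first try strict log-convexity, using that $A(x)$ genuinely depends on $x$ because $P_{\pm1}$ have positive entries.
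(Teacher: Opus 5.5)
Your route differs from the paper's. The paper proves this lemma only by citing Theorem~4 of Gail, Hantler and Taylor, with their $\gamma$ of opposite sign to $v$. You give a self-contained argument instead, and most of it is sound. The Wielandt comparison on $|x|=1$, $x\neq1$, is correct. The homotopy $tA(x)-xI$ with $\|tA(x)\|_\infty\le t<1$ on the circle is correct. So is the reduction near $x=1$ to the simple Perron branch $t\rho(x)=x$ with $\rho'(1)=1+v$, together with the sign of $dx/dt$. These settle the cases $v<0$ and $v>0$, and the absence of other roots on the unit circle.

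The gap is the case $v=0$. There, both the exact multiplicity two and the placement of the two nearby roots (one inside, one outside) depend on $\rho''(1)>0$, and your justification for this fails in two ways.

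\emph{Wrong variable in Kingman's theorem.} Kingman gives convexity of $f(s)=\log\rho(e^s)$, not of $\log\rho$ in $x$. The entries $\alpha x^2+\beta x+\gamma$ are log-convex in $s=\log x$ but not in $x$. For example, take $N=1$ and $p_+=p_0=p_-=1/3$. Then $\rho(x)=(x^2+x+1)/3$, $v=0$, and $(\log\rho)''(1)=-1/3$. With the correct statement, $\rho(1)=\rho'(1)=1$ gives $\rho''(1)=f''(0)\ge0$. That supplies the sign you need, but not strictness.

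\emph{Strict convexity would not suffice anyway.} A strictly convex analytic function can have vanishing second derivative at a point. If $f''(0)=0$, then $x=1$ would be a zero of $\det G$ of even order $\ge4$, contradicting part~3, and nothing else in your argument excludes this.

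The missing idea is non-degeneracy of the variance.
\begin{itemize}
\item Write $A(e^s)=e^sM(s)$ with $M(s)=e^sP_{-1}+P_0+e^{-s}P_1$. Then $f(s)=s+\Lambda(s)$, where $\Lambda(s)=\log\rho(M(s))$.
\item $\Lambda''(0)$ is the asymptotic variance $\lim_n n^{-1}\operatorname{Var}\xi^1_n=E_\pi D_0^2$.
\item Here $D_k=\Delta_k-m_{\xi^2_k}+\psi(\xi^2_{k+1})-(P\psi)(\xi^2_k)$, and $\psi$ solves $(I-P)\psi=m-v\mathbf1$.
\item If $E_\pi D_0^2=0$, then $\Delta_k$ would be a deterministic function of the pair $(\xi^2_k,\xi^2_{k+1})$.
\item That is impossible: for every phase pair $j\to l$, the level increment takes each value $-1,0,1$ with positive probability, because all entries of $P_{-1},P_0,P_1$ are positive.
\end{itemize}
With $\rho''(1)>0$ in hand, your two real roots of $t\rho(x)=x$ straddling $1$ give the $v=0$ configuration. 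The paper itself never proves this independently. Its Lemma~7 deduces $\lambda''(1)\neq0$ from the present lemma, and this equals your $\rho''(1)$ since $\lambda(x)=x\rho(1/x)$. So this is exactly the point the cited theorem carries.
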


\begin{proof}
Apply Theorem~4 of Gail, Hantler, and Taylor~\cite{base6} to
the matrix polynomial
\[
A(x)=xI-(P_1+xP_0+x^2P_{-1})=-G(x).
\]
In the notation of that theorem, $MN=N$, the periodicity parameter is $g=1$,
and the parameter $\gamma$ has the opposite sign to our mean drift $v$.
Thus the assertions concerning the number and multiplicities of the roots of $\det A(x)$
for $v<0$, $v>0$, and $v=0$ follow directly from that theorem.
Since $\det A(x)=(-1)^N\det G(x)$, they also hold for $\det G(x)$. This proves the lemma.
\end{proof}

\begin{lemma}
Suppose $v\leq0$. If $Hu=\lambda u$, $u\neq0$, then
$G(\lambda)u=0$. Conversely, if $|\lambda|<1$ and $G(\lambda)u=0$, then
$Hu=\lambda u$. For $\lambda=1$, we have
$\ker G(1)=\ker(I-H)=\operatorname{span}\{\mathbf 1\}$.
\end{lemma}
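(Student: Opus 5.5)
The plan is to base everything on the matrix equation \eqref{h_eq}, $G(H)=0$, together with an explicit right division of $G(x)$ by $xI-H$.

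\textbf{Forward direction.} Suppose $Hu=\lambda u$ with $u\neq0$. Then $H^2u=\lambda^2u$, and applying the identity
\[
0=G(H)=P_{-1}H^2+(P_0-I)H+P_1
\]
to $u$ gives
\[
0=\bigl(P_{-1}\lambda^2+(P_0-I)\lambda+P_1\bigr)u=G(\lambda)u.
\]

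\textbf{Factorization.} For the converse I would make the generalized B\'ezout factorization used in the proof of Lemma~3 explicit. Set
\[
M(x)=xP_{-1}+P_{-1}H+P_0-I .
\]
Expanding $M(x)(xI-H)$ gives $P_{-1}x^2+(P_0-I)x-(P_{-1}H^2+(P_0-I)H)$. By \eqref{h_eq}, the constant term $-(P_{-1}H^2+(P_0-I)H)$ equals $P_1$. Hence
\[
G(x)=M(x)(xI-H),\qquad \det G(x)=\det M(x)\,\det(xI-H)
\]
as polynomials in $x$.

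\textbf{Root counting.} Since $v\le0$, Lemma~1 gives $H=Q$, which is stochastic with positive entries. Hence the $N$ roots of $\det(xI-H)$, counted with multiplicity, all lie in $|x|\le1$, and $x=1$ is a simple root by Perron--Frobenius. Lemma~4 says that the roots of $\det G$ in the closed disk number exactly $N$ when $v<0$. When $v=0$ they number $N+1$: there are $N-1$ in the open disk and a double root at $1$.

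Multiplicities add in the factorization. So when $v<0$, $\det M$ has no roots in $|x|\le1$. When $v=0$, its only root in the closed disk is a simple root at $x=1$. In both cases
\[
\det M(\lambda)\neq0\quad\text{for } |\lambda|<1 .
\]
This bookkeeping is the step I expect to require the most care. In particular, one must check that the roots of $\det(xI-H)$ inside the open disk use up all $N-1$ open-disk roots of $\det G$, which holds because multiplicities are additive in a product of polynomials.

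\textbf{Converse direction.} Let $|\lambda|<1$ and $G(\lambda)u=0$. Then $M(\lambda)(\lambda I-H)u=0$. Since $M(\lambda)$ is invertible, this forces $(\lambda I-H)u=0$, that is, $Hu=\lambda u$.

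\textbf{The case $\lambda=1$.} Here $G(1)=P_{-1}+P_0+P_1-I=P-I$. The induced chain is irreducible, so $\ker(P-I)=\operatorname{span}\{\mathbf 1\}$. Likewise $H=Q$ is stochastic with all entries positive, so $\ker(I-H)=\operatorname{span}\{\mathbf 1\}$ by Perron--Frobenius. Therefore the two kernels coincide.
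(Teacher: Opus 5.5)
Your proof is correct and follows the paper's argument. The paper likewise factors $G(x)=M(x)(xI-H)$ via the generalized B\'ezout theorem, uses the root counts of Lemmas~3--4 to get $\det M(\lambda)\neq0$ for $|\lambda|<1$, and identifies $\ker G(1)=\ker(P-I)$ and $\ker(I-H)$ with $\operatorname{span}\{\mathbf 1\}$ by Perron--Frobenius; your explicit quotient $M(x)=xP_{-1}+P_{-1}H+P_0-I$ and the separate multiplicity bookkeeping for $v<0$ and $v=0$ (where $\det M$ has a simple root at $x=1$) only make explicit what the paper does tersely.
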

\begin{proof}
By the generalized B\'ezout theorem (see Chapter~4, Section~2 of \cite{base1}), there exists
a matrix polynomial $M(x)$ such that
\[
 G(x)=M(x)(xI-H).
\]
Therefore $Hu=\lambda u$ immediately implies $G(\lambda)u=0$.

Consider the converse. From
\[
 \det G(x)=\det M(x)\det(xI-H)
\]
and the root counts in Lemmas~3--4, it follows that $\det M(\lambda)\neq0$ for all
roots of $\det G$ satisfying $|\lambda|<1$: all roots of $\det G$ in the unit disk,
counted with multiplicity, are already accounted for by the eigenvalues of $H$. Thus
$G(\lambda)u=0$ with $|\lambda|<1$ implies $(\lambda I-H)u=0$.

If $v\leq0$, then $H$ is a positive stochastic matrix; hence
its eigenvalue $1$ is simple and $\ker(I-H)=\operatorname{span}\{\mathbf1\}$.
On the other hand, $G(1)=P-I$, and for the positive stochastic matrix $P$,
we likewise have $\ker(P-I)=\operatorname{span}\{\mathbf1\}$. This proves the lemma.
\end{proof}
\begin{lemma}[see {\cite[Chapter~1, Sections~1.4--1.5, Propositions~1.9 and~1.11]{GLR}}]
Suppose that $|\lambda|<1$ and the assumptions of Lemma~5 hold. Nonzero vectors
$u_0,u_1,\ldots,u_{p-1}$ form a right Jordan chain of the matrix $H$,
that is,
\[
 (H-\lambda I)u_0=0,\qquad
 (H-\lambda I)u_j=u_{j-1},\quad j=1,\ldots,p-1,
\]
if and only if, for $j=0,\ldots,p-1$
\begin{equation}
\label{jordan-chain-G}
 \sum_{r=0}^{j}\frac{G^{(r)}(\lambda)}{r!}\,u_{j-r}=0.
\end{equation}
\end{lemma}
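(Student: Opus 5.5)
The plan is to prove Lemma~6 by a direct algebraic computation in both directions. It rests on the factorization $G(x)=M(x)(xI-H)$ from the generalized B\'ezout theorem, used in Lemma~5, together with the fact (from Lemmas~3--5) that $\det M(\lambda)\neq0$ for every $|\lambda|<1$. Since $G$ has degree $2$, we have $M(x)=P_{-1}x+M_0$ with $M_0=P_{-1}H+P_0-I$. This follows by expanding $M(x)(xI-H)$ and matching coefficients, where $G(H)=0$ is exactly the constant-term condition $-M_0H=P_1$.

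\emph{Forward direction.} Suppose $(H-\lambda I)u_0=0$ and $(H-\lambda I)u_j=u_{j-1}$. Differentiate the identity $G(x)=M(x)(xI-H)$ $r$ times at $x=\lambda$ using the Leibniz rule. Since $xI-H$ is linear in $x$,
\[
\frac{G^{(r)}(\lambda)}{r!}=\frac{M^{(r)}(\lambda)}{r!}(\lambda I-H)+\frac{M^{(r-1)}(\lambda)}{(r-1)!},\qquad r\ge1 .
\]
Substituting this into $\sum_{r=0}^{j}\frac{G^{(r)}(\lambda)}{r!}u_{j-r}$ and using $(\lambda I-H)u_{j-r}=-u_{j-r-1}$ (with $u_{-1}=0$), the sum telescopes. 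Term by term, the contribution $\frac{M^{(r)}(\lambda)}{r!}(\lambda I-H)u_{j-r}=-\frac{M^{(r)}(\lambda)}{r!}u_{j-r-1}$ cancels against the $\frac{M^{(r)}(\lambda)}{r!}u_{j-r-1}$ coming from index $r+1$. This gives \eqref{jordan-chain-G}.

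\emph{Converse.} We argue by induction on $j$. For $j=0$, the condition is $G(\lambda)u_0=0$, and Lemma~5 (using $|\lambda|<1$) gives $(H-\lambda I)u_0=0$. For the inductive step, assume $u_0,\dots,u_{j-1}$ already form a Jordan chain and that \eqref{jordan-chain-G} holds at level $j$. Define $w=(H-\lambda I)u_j-u_{j-1}$. Redoing the telescoping computation without assuming the last link, everything cancels except
\[
0=\sum_{r=0}^{j}\frac{G^{(r)}(\lambda)}{r!}u_{j-r}=-M(\lambda)\bigl[(H-\lambda I)u_j-u_{j-1}\bigr]=-M(\lambda)w .
\]
Since $\det M(\lambda)\neq0$ for $|\lambda|<1$, this forces $w=0$, which completes the induction.

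The main obstacle is justifying $\det M(\lambda)\neq0$ on the open unit disk. This comes from the root count: $\det G=\det M\cdot\det(xI-H)$, the polynomial $\det(xI-H)$ already accounts for $N$ roots in $|x|\le1$ (Lemma~3), and Lemma~4 allows at most $N-1$ roots in the open disk plus those at $1$. The delicate case is $v=0$, where $x=1$ is a double root of $\det G$; but we need only $|\lambda|<1$, where the count of $N-1$ roots in the open disk matches the $N-1$ non-unit eigenvalues of $H$ (eigenvalue $1$ being simple). I would check this bookkeeping carefully. The rest is mechanical; alternatively, one can simply cite \cite{GLR} as the statement indicates.
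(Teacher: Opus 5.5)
Your proof is correct, but it takes a different route from the paper. The paper gives no argument beyond the reference to Propositions~1.9 and~1.11 of \cite{GLR}, i.e.\ to the general theory of Jordan chains of matrix polynomials.

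Any use of that theory here still needs the fact that $M(\lambda)$ is invertible in the B\'ezout factorization $G(x)=M(x)(xI-H)$. Only then do the chains of $G$ at $\lambda$ coincide with those of the pencil $xI-H$, which are the ordinary Jordan chains of $H$. The paper obtains this invertibility only from the root count in the proof of Lemma~5.

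You instead work with the explicit quotient $M(x)=P_{-1}x+P_{-1}H+P_0-I$ and the Leibniz identity $\frac{G^{(r)}(\lambda)}{r!}=\frac{M^{(r)}(\lambda)}{r!}(\lambda I-H)+\frac{M^{(r-1)}(\lambda)}{(r-1)!}$. This shows that the forward direction holds for every $\lambda$. It also reduces each inductive step of the converse to $M(\lambda)\bigl[(H-\lambda I)u_j-u_{j-1}\bigr]=0$. As a result it is transparent that $|\lambda|<1$ and $v\le0$ enter only through $\det M(\lambda)\neq0$. The citation buys brevity and generality; your argument is elementary and self-contained.

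One point should be stated cleanly. The count you need is that all eigenvalues of $H$ other than the simple eigenvalue $1$ lie in the open disk. This follows from Lemma~3 together with the ``no other roots on $|x|=1$'' clause of Lemma~4, or from Perron--Frobenius for the positive stochastic $H$. Then $\det(xI-H)$ and $\det G$ both have exactly $N-1$ zeros in $|x|<1$, so $\det M$ has none there. Your intermediate remark about $N$ roots in $|x|\le1$ is muddled for $v=0$, where in fact $\det M(1)=0$. Your final sentence does reach the right conclusion.
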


\subsection{Proof of Theorem \ref{the1}} \label{ss3.1}
\begin{proof}[]
Let $l\in S$. By the Markov property, for $i\geq1$ and $n\geq1$,
\[
\begin{aligned}
 h_{0l}^{ij}(n)=&\sum_{r\in S}p_{1,r}^{j}h_{0l}^{i-1,r}(n-1)
 +\sum_{r\in S}p_{0,r}^{j}h_{0l}^{i,r}(n-1)\\
 &+\sum_{r\in S}p_{-1,r}^{j}h_{0l}^{i+1,r}(n-1),
\end{aligned}
\]
where
\[
 h_{0l}^{ij}(0)=\mathbf 1_{\{i=0,\,j=l\}}.
\]
Passing to the generating functions, we obtain, for $i\geq1$,
\begin{equation}
\label{strip}
 h_{0l}^{i}(z)
 =z\bigl(P_1h_{0l}^{i-1}(z)+P_0h_{0l}^{i}(z)+P_{-1}h_{0l}^{i+1}(z)\bigr).
\end{equation}
Since $h_{0l}^{0}(z)=e_l$, introduce
\[
 H(x,z)=\sum_{i=1}^{\infty}x^ih_{0l}^{i}(z).
\]
Then
\[
 \sum_{i=1}^{\infty}x^ih_{0l}^{i-1}(z)=x e_l+xH(x,z),
\]
and also
\[
 \sum_{i=1}^{\infty}x^ih_{0l}^{i+1}(z)=x^{-1}H(x,z)-h_{0l}^{1}(z).
\]
Consequently, by \eqref{strip},
\[
 H(x,z)=z\Bigl(P(x)H(x,z)+xP_1e_l-P_{-1}h_{0l}^{1}(z)\Bigr).
\]
Set
\[
 H_0(x,z)=\sum_{i=1}^{\infty}x^{i-1}h_{0l}^{i}(z).
\]
For $x\neq0$, we have $H_0(x,z)=x^{-1}H(x,z)$; at $x=0$, this equality
is understood by analytic continuation, with $H_0(0,z)=h_{0l}^{1}(z)$.
Thus, for $x\neq0$,
\[
 (xI-zxP(x))H_0(x,z)
 =z\bigl(xP_1e_l-P_{-1}h_{0l}^{1}(z)\bigr),
\]
and consequently,
\begin{equation}
\label{strip_gf}
 H_0(x,z)
 =z(xI-zxP(x))^{-1}\bigl(xP_1e_l-P_{-1}h_{0l}^{1}(z)\bigr)
\end{equation}
at all points where the inverse matrix exists. Setting $z=1$, we obtain
\[
 H_{0l}(x)=(xI-xP(x))^{-1}\bigl(xP_1e_l-P_{-1}h_{0l}^{1}\bigr),
\]
which proves formula \eqref{H}. Since the left-hand side is analytic for $|x|<1$,
any singularities of the right-hand side inside the unit disk are removable, and
the formula extends to them analytically.
\end{proof}
\subsection{Algorithm for computing the matrix $H$}\label{subsec:label}
We consider the case $v\leq0$, to which Lemma~5 applies.
All basis vectors are regarded as columns. The vector $\mathbf1$ denotes
the column of $N$ ones (see the definition in Section~2).

First find all zeros of $\det G(x)$ in the open unit disk, including
a possible zero root $\lambda=0$. By Lemmas~3--5, counted with
algebraic multiplicities, these coincide with the eigenvalues of $H$ that lie in
$|x|<1$. If $v\leq0$, we add the simple eigenvalue
$\lambda=1$ with eigenvector $\mathbf1$.

Now fix a root $\lambda$ with $|\lambda|<1$. Following the notation
for root multiplicities used in \cite{base},
let $m(\lambda)$ be the multiplicity of $\lambda$ as a zero of the polynomial
\[
 L(z):=\det G(z).
\]
In other words, $m(\lambda)$ is the unique positive integer for which
\[
 L^{(r)}(\lambda)=0,\qquad r=0,1,\ldots,m(\lambda)-1,
 \qquad L^{(m(\lambda))}(\lambda)\neq0.
\]
In our setting, Lemma~3, the factorization of the
matrix polynomial, and the root count in Lemma~4 imply that for
$|\lambda|<1$, the number $m(\lambda)$ is also the algebraic
multiplicity of $\lambda$ as an eigenvalue of $H$. The boundary
eigenvalue $\lambda=1$ for $v\leq0$ is treated separately, as noted above.
Set
\[
 G_r(\lambda)=\frac{G^{(r)}(\lambda)}{r!},\qquad r\geq0,
\]
and for $k\geq1$ introduce the block matrix
\[
 \mathcal T_k(\lambda)=
 \begin{pmatrix}
 G_0&0&\cdots&0\\
 G_1&G_0&\cdots&0\\
 \vdots&\vdots&\ddots&\vdots\\
 G_{k-1}&G_{k-2}&\cdots&G_0
 \end{pmatrix}.
\]
The vector $(u_0^T,\ldots,u_{k-1}^T)^T$ belongs to
$\ker\mathcal T_k(\lambda)$ if and only if
$u_0,\ldots,u_{k-1}$ satisfy system \eqref{jordan-chain-G}. If
$u_0\neq0$, this sequence is a right root chain of the
matrix polynomial $G$ and, by Lemma~6, corresponds to a Jordan chain
of $H$. Zero elements of the kernel and solutions with $u_0=0$ are retained in the linear
spaces below but do not, by themselves, count as chains of length $k$.

To select the chains explicitly, set
\[
 K_k(\lambda)=\ker\mathcal T_k(\lambda),\qquad
 E_k(\lambda)=\Bigl\{u_0:\ \exists u_1,\ldots,u_{k-1}\text{ such that }
 (u_0^T,\ldots,u_{k-1}^T)^T\in K_k(\lambda)\Bigr\}.
\]
Then
\[
 E_1(\lambda)\supseteq E_2(\lambda)\supseteq\cdots,
\]
and $\dim E_k(\lambda)$ is the number of Jordan blocks for $\lambda$ of size at
least $k$. In particular, the number of blocks of size exactly $k$ is
\[
 \dim E_k(\lambda)-\dim E_{k+1}(\lambda),
\]
and
\[
 m(\lambda)=\sum_{k\geq1}\dim E_k(\lambda).
\]
All spaces $K_k$ and $E_k$ are obtained by solving finite systems of linear
equations. For each $k$, choose a complementary subspace $B_k$ to
$E_{k+1}$ in $E_k$. For each basis vector $u_0$ of $B_k$, solve system
\eqref{jordan-chain-G} for $u_1,\ldots,u_{k-1}$; the resulting sequence
$u_0,\ldots,u_{k-1}$ is a Jordan chain of length $k$. The procedure terminates,
since the sum of the lengths of the constructed chains equals $m(\lambda)$.

After doing this for all $|\lambda|<1$ and adding, for the simple
eigenvalue $\lambda=1$, a Jordan chain of length~$1$ consisting of
the single vector $u_0=\mathbf1$ (there are no subsequent vectors
$u_1,u_2,\ldots$ in this chain), we obtain $N$
linearly independent generalized eigenvectors. Let $S$ be the matrix
formed by these vectors, ordered
by their Jordan chains, and let $J$ be the corresponding block-diagonal
Jordan matrix. Then
\[
 H=SJS^{-1}.
\]
Thus, all columns $h_{0l}^{1}$ can be obtained by a finite algebraic
procedure involving roots of $\det G$, derivatives of $G$, and solutions of finite systems of
linear equations.

\subsection{Proof of Theorem \ref{th2}} \label{ss3.3}
\begin{lemma}
Suppose that for $x\neq0$,
$F(x)=-x^2G(x^{-1})=xI-xP(x)$. If $v<0$, the matrix function $F^{-1}(x)$
has a pole of order one at $x=1$; if $v=0$, it has a pole of order two
there.
\end{lemma}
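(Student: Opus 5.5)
The plan is to write $F^{-1}(x)=\operatorname{adj}F(x)/\det F(x)$ near $x=1$. The order of the pole is then the order of vanishing of $\det F$ at $1$ minus the order of vanishing of $\operatorname{adj}F$ at $1$. So it suffices to show two things: $\det F$ has a zero of exact order $1$ (if $v<0$) or $2$ (if $v=0$) at $x=1$, and $\operatorname{adj}F(1)\neq0$.

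\emph{Order of the zero of $\det F$.} For $x\neq0$ we have $F(x)=-x^2G(x^{-1})$, hence
\[
\det F(x)=(-1)^N x^{2N}\det G(x^{-1}).
\]
Near $x=1$ the factor $x^{2N}$ does not vanish. The map $x\mapsto x^{-1}$ is locally biholomorphic with $x^{-1}-1=-(x-1)/x$. Therefore the order of the zero of $\det F$ at $x=1$ equals the multiplicity of the root $x=1$ of $\det G$. By Lemma~4 this multiplicity is $1$ when $v<0$ and $2$ when $v=0$. Hence $\det F(x)=c_m(x-1)^m+O((x-1)^{m+1})$ with $c_m\neq0$, where $m=1$ or $m=2$ respectively.

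\emph{Nonvanishing of the adjugate.} We have $F(1)=I-P(1)=I-P$. Since $P$ is a positive stochastic matrix, the eigenvalue $1$ is simple, with $\ker(I-P)=\operatorname{span}\{\mathbf1\}$ and left kernel $\operatorname{span}\{\pi^T\}$ (cf.\ Lemma~5). Thus $\operatorname{rank}F(1)=N-1$. For a matrix of corank one, the adjugate is a nonzero rank-one matrix whose columns lie in the kernel and whose rows lie in the left kernel, so
\[
\operatorname{adj}(I-P)=c\,\mathbf1\pi^T .
\]
Here $c=\prod_{\mu\neq1}(1-\mu)\neq0$, where the product runs over the eigenvalues $\mu\neq1$ of $P$ counted with multiplicity. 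Equivalently, $c$ is the sum of the principal $(N-1)$-minors of $I-P$. Since $\operatorname{adj}F$ is a polynomial and hence continuous, $\operatorname{adj}F(x)=c\,\mathbf1\pi^T+O(x-1)$.

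\emph{Conclusion.} Combining the two steps,
\[
F^{-1}(x)=\frac{c\,\mathbf1\pi^T+O(x-1)}{c_m(x-1)^m\bigl(1+O(x-1)\bigr)} .
\]
This has a pole of exact order $m$ at $x=1$, with leading coefficient $(c/c_m)\,\mathbf1\pi^T\neq0$. This gives order one for $v<0$ and order two for $v=0$.

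The only delicate point is exactness: a priori a multiple zero of $\det F$ could be partially cancelled by a zero of the adjugate. This is excluded precisely because $\ker F(1)$ is one-dimensional, so $\operatorname{adj}F(1)\neq0$. Equivalently, $F$ has a single partial multiplicity at $1$, equal to the full root multiplicity.
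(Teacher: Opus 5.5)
Your proof is correct and is essentially the paper's argument. Both write $F^{-1}=\operatorname{adj}F/\det F$, get $\operatorname{adj}F(1)\neq0$ from $\operatorname{rank}F(1)=N-1$, and obtain the order of the zero of $\det F$ at $1$ from that of $\det G$ via Lemma~4. The only difference is the case $v<0$: there the paper does not cite Lemma~4 but proves simplicity directly from the analytic eigenvalue branch, $\det F(x)=x^N(1-\lambda(x))\widetilde g(x)$ with $\widetilde g(1)\neq0$ and $\lambda'(1)=\pi^T(P_1-P_{-1})\mathbf1=-v\neq0$, and it reuses this identity later in the proof of Theorem~3.
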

\begin{proof}
Set
\[
 \chi(x,\mu)=\det(\mu I-P(x)).
\]
Since $P(x)$ is analytic near $x=1$ and $1$ is a simple
eigenvalue of $P(1)=P$, the implicit function theorem gives an
analytic eigenvalue branch $\lambda(x)$ such that
$\lambda(1)=1$ and
\[
 \chi(x,\lambda(x))=0.
\]
In a neighborhood of $(1,1)$, we have
\[
 \chi(x,\mu)=(\mu-\lambda(x))g(x,\mu),
\]
where $g(1,1)\neq0$. Therefore,
\[
 \det(I-P(x))=(1-\lambda(x))\widetilde g(x),
 \qquad \widetilde g(1)\neq0.
\]
Since $F(x)=x(I-P(x))$,
\[
 \det F(x)=x^N(1-\lambda(x))\widetilde g(x).
\]

Choose analytic right and left eigenvectors $r(x)$ and $l(x)$,
normalized so that
\[
 r(1)=\mathbf1,\qquad l(1)=\pi,\qquad l(x)^Tr(x)=1.
\]
Differentiating $P(x)r(x)=\lambda(x)r(x)$ at $x=1$ and multiplying on the left by
$\pi^T$, we obtain
\[
 \lambda'(1)=\pi^TP'(1)\mathbf1
 =\pi^T(P_1-P_{-1})\mathbf1.
\]
By the definition of the mean increment,
\[
 m=(P_{-1}-P_1)\mathbf1,
 \qquad
 v=\pi^T(P_{-1}-P_1)\mathbf1,
\]
and hence
\begin{equation}
\label{lambda-prime-drift}
 \lambda'(1)=-v.
\end{equation}
If $v<0$, then $\lambda'(1)\neq0$, so $\det F(x)$ has a simple zero at
$x=1$. Since $\operatorname{rank}F(1)=N-1$, the matrix
$\operatorname{adj}F(1)$ is nonzero, and the identity
\[
 F^{-1}(x)=\frac{\operatorname{adj}F(x)}{\det F(x)}
\]
implies that $F^{-1}(x)$ has a simple pole.

If $v=0$, then by Lemma~4 the function $\det G(x)$ has a zero of order exactly
order two at $x=1$. For $x\neq0$, the definition of $F$ gives
\[
F(x)=-x^2G(x^{-1}),
\]
hence,
\[
\det F(x)=(-1)^N x^{2N}\det G(x^{-1}).
\]
Thus $\det F(x)$ also has a zero of order exactly two at $x=1$. On the other
hand,
\[
\det F(x)=x^N(1-\lambda(x))\widetilde g(x),\qquad \widetilde g(1)\neq0.
\]
Since $\lambda'(1)=0$ is already known when $v=0$, we conclude that
$\lambda''(1)\neq0$. Consequently, $1-\lambda(x)$ has a zero of order two at
$x=1$; using $\operatorname{adj}F(1)\neq0$, we obtain a pole of order two
for $F^{-1}(x)$. This proves the lemma.
\end{proof}
We shall use the following operation. Let $v$ and $u$ be column vectors of the same dimension. By $v\circ u$ we denote the matrix $M$ with entries $(M)_{ij}=v_iu_j$.

\paragraph{Proof of Theorem \ref{th2}.}
\begin{proof}[]
\[
 H_0(x)=F^{-1}(x)\bigl(xP_1e_l-P_{-1}h_{0l}^{1}\bigr).
\]
By Lemma~7, in a punctured neighborhood of $x=1$, the matrix $F^{-1}(x)$ admits
the Laurent expansion
\begin{gather}
\label{11}
 F^{-1}(x)=\frac{A}{x-1}+C(x),
\end{gather}
where $C(x)$ is analytic near $1$. Therefore,
\begin{gather}
\label{12}
 H_0(x)=\frac{A\bigl(xP_1e_l-P_{-1}h_{0l}^{1}\bigr)}{x-1}
 +C(x)\bigl(xP_1e_l-P_{-1}h_{0l}^{1}\bigr).
\end{gather}

By Proposition~2, we already know that
\[
 h_{0l}^{n}\longrightarrow \nu_l\mathbf1.
\]
Consequently, Abel's theorem for power series yields
\begin{equation}
\label{abel-limit}
 \lim_{x\uparrow1}(1-x)H_0(x)=\nu_l\mathbf1.
\end{equation}
Since $C(x)$ is bounded in some real neighborhood of $1$,
the second term of \eqref{12}, multiplied by $(1-x)$, tends to zero.
Thus the limit in \eqref{abel-limit} is determined entirely by the principal part
of the Laurent expansion. It remains to find the matrix $A$.

Set $\varepsilon=1-x$. Then
\[
F(x)=-(x^{2}P_{1}-x(I-P_{0})+P_{-1})
=F_2\varepsilon^2+F_1\varepsilon+F_0,
\]
where
\begin{equation}\label{F-eps-coeffs}
F_2=-P_1,\qquad F_1=P_0+2P_1-I,\qquad F_0=I-P.
\end{equation}
Since $x-1=-\varepsilon$, equation \eqref{11} gives
\[
F^{-1}(x)=-\frac{A}{\varepsilon}+C_0+O(\varepsilon).
\]
Substituting this into $F^{-1}(x)F(x)=I$ and comparing the coefficients of
$\varepsilon^{-1}$ and $\varepsilon^0$, we obtain
\[
AF_0=0,
\qquad
-AF_1+C_0F_0=I.
\]
Since $F_0=I-P$, the rows of $A$ are left eigenvectors of $P$
corresponding to eigenvalue $1$. Hence,
\[
A=\alpha\pi^T,
\qquad
\alpha=(\alpha_1,\ldots,\alpha_N)^T.
\]
Multiplying $-AF_1+C_0F_0=I$ on the right by $\mathbf1$ and using
$F_0\mathbf1=0$, we have
\[
-AF_1\mathbf1=\mathbf1.
\]
For any $i\in S$, this gives
\[
-\alpha_i\pi^TF_1\mathbf1=1.
\]
But
\[
F_1\mathbf1=(P_1-P_{-1})\mathbf1,
\qquad
\pi^TF_1\mathbf1=-v,
\]
so $\alpha_i=1/v$ for all $i$. Thus,
\begin{equation}\label{F-residue-v}
F^{-1}(x)
=\frac{\mathbf1\pi^T}{v(x-1)}+C(x)
=-\frac{\mathbf1\pi^T}{v(1-x)}+C(x).
\end{equation}

Set
\[
v_l(x)=xP_1e_l-P_{-1}h^1_{0l}.
\]
Then
\[
H_0(x)=-\frac{\mathbf1\pi^Tv_l(x)}{v(1-x)}+C(x)v_l(x).
\]
Since $C(x)$ is analytic near $1$, the function $C(x)v_l(x)$ is bounded there.
Thus, after multiplication by $(1-x)$, its contribution tends to zero. On the other hand,
by Proposition~2 and Abel's theorem, we already know that
\[
\lim_{x\uparrow1}(1-x)H_0(x)=\nu_l\mathbf1.
\]
Consequently,
\[
\nu_l\mathbf1
=-\frac{\mathbf1\pi^T(P_1e_l-P_{-1}h^1_{0l})}{v},
\]
and hence
\[
\nu_l=-\frac{\pi^T(P_1e_l-P_{-1}h^1_{0l})}{v},\qquad l\in S.
\]
The theorem is proved.
\end{proof}
\subsection{Proof of Theorem \ref{th3}}\label{ss3.4}
\begin{lemma}[see {\cite[\S~4.3]{KemenySnell1976}}] Let $Z=(I-P+\mathbf{1}\circ \pi)^{-1}-\mathbf{1}\circ \pi$. Then
\begin{equation}
(I-P)Z=Z(I-P)=I-\mathbf{1}\circ\pi.
\end{equation}
\end{lemma}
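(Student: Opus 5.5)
The plan is a direct algebraic argument built on the three elementary identities satisfied by the rank-one matrix $W:=\mathbf1\circ\pi=\mathbf1\pi^T$, followed by an expansion of $(I-P)Z$ and $Z(I-P)$. Because $P$ is stochastic, $P\mathbf1=\mathbf1$. Because $\pi$ is stationary, $\pi^TP=\pi^T$. Because $\pi$ is normalized, $\pi^T\mathbf1=1$. These give
\[
PW=W,\qquad WP=W,\qquad W^2=W.
\]
Set $M:=I-P+W$, so that $I-P=M-W$ and $Z=M^{-1}-W$.

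\textbf{Step 1: $M$ is invertible.} Suppose $Mx=0$. Multiplying on the left by $\pi^T$ and using $\pi^T(I-P)=0$ and $\pi^TW=\pi^T$ gives $\pi^Tx=0$. Hence $Wx=\mathbf1(\pi^Tx)=0$, and the equation $Mx=0$ reduces to $(I-P)x=0$. Since $P$ is a positive stochastic matrix, $\ker(I-P)=\operatorname{span}\{\mathbf1\}$, as already used in Lemma~5. So $x=c\mathbf1$, and then $0=\pi^Tx=c$. Thus $\ker M=\{0\}$.

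\textbf{Step 2: $M^{-1}$ commutes with $W$ in the required way.} From the identities above,
\[
MW=W-PW+W^2=W,\qquad WM=W-WP+W^2=W.
\]
Multiplying the first identity on the left by $M^{-1}$ gives $M^{-1}W=W$. Multiplying the second on the right by $M^{-1}$ gives $WM^{-1}=W$.

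\textbf{Step 3: expand the products.} Using Steps 1 and 2,
\[
(I-P)Z=(M-W)(M^{-1}-W)=I-MW-WM^{-1}+W^2=I-W-W+W=I-W.
\]
In the same way,
\[
Z(I-P)=(M^{-1}-W)(M-W)=I-M^{-1}W-WM+W^2=I-W.
\]
This is exactly the claimed identity $(I-P)Z=Z(I-P)=I-\mathbf1\circ\pi$.

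The only step that is not pure bookkeeping is the invertibility in Step 1. It rests on the one-dimensionality of $\ker(I-P)$, which holds because the induced chain is irreducible (indeed $P$ has all entries positive). The rest is routine once the three identities for $W$ are written down.
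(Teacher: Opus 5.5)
Your proof is correct, and it takes a different route from the paper: the paper gives no argument of its own and only cites Kemeny--Snell, \S~4.3. There the matrix $(I-P+\mathbf1\circ\pi)^{-1}$, their fundamental matrix, is built analytically. Note that this is not the paper's $Z$; it differs from it by $\mathbf1\circ\pi$. One uses $(P-\mathbf1\circ\pi)^n=P^n-\mathbf1\circ\pi$ for $n\ge1$ and $P^n\to\mathbf1\circ\pi$ to get the convergent Neumann series $I+\sum_{n\ge1}(P^n-\mathbf1\circ\pi)$, and then derives the corresponding identities for that matrix. The statement for the paper's $Z$ follows after subtracting $\mathbf1\circ\pi$ and using $(I-P)(\mathbf1\circ\pi)=(\mathbf1\circ\pi)(I-P)=0$, a step the paper leaves implicit.

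Your argument is purely algebraic. Invertibility of $M$ comes from $\ker(I-P)=\operatorname{span}\{\mathbf1\}$ together with $\pi^T\mathbf1=1$. Both identities then follow from $PW=WP=W^2=W$ and $M^{-1}W=WM^{-1}=W$.

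What your route buys: it is self-contained, handles the paper's $Z$ directly in both orders of multiplication, and needs only irreducibility, not the aperiodicity behind $P^n\to\mathbf1\circ\pi$. What the series approach buys: it gives the interpretation $Z=\sum_{n\ge0}(P^n-\mathbf1\circ\pi)$ as the accumulated deviation from stationarity.
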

Let
\[
\Pi(x)=r(x)\circ\,\ell(x),
\]
where $r(x)$ and $\ell(x)$ are analytic right and left eigenvectors
corresponding to the eigenvalue $\lambda(x)$ of $P(x)$,
normalized by $\ell(x)^T r(x)=1$.

\begin{lemma}[see {\cite[Chapter~I, \S~5.3; Chapter~II, \S~1.4]{Kato1995}}] In a punctured neighborhood of $x=1$, the following representation holds:
\begin{gather}
\label{al_nacer}
(I-P(x))^{-1}=\frac{\Pi(x)}{1-\lambda(x)}+R(x), 
\end{gather}
where $R(x)$ is analytic in a neighborhood of $x=1$.
\end{lemma}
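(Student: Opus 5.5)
We prove the representation \eqref{al_nacer} by a Riesz-projection decomposition of the analytic matrix family $P(x)$ near $x=1$. The proof is local, and its only input is that $1$ is a simple eigenvalue of $P(1)=P$. That holds because $P$ is a positive stochastic matrix, so the eigenvalue $1$ is algebraically simple by Perron--Frobenius.

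First, fix a small circle $\Gamma$ around $\mu=1$ in the spectral plane that separates $1$ from the remaining eigenvalues of $P$. By continuity of the spectrum, for $x$ in a small disk $U$ around $1$ no eigenvalue of $P(x)$ lies on $\Gamma$. Exactly one eigenvalue, $\lambda(x)$, lies inside $\Gamma$; it is the analytic branch from the proof of Lemma~7. The Riesz projection
\[
\Pi(x)=\frac{1}{2\pi i}\oint_\Gamma(\mu I-P(x))^{-1}\,d\mu
\]
is then analytic in $x\in U$ and has rank one. It therefore equals $r(x)\circ\ell(x)$ for analytic eigenvectors normalized by $\ell(x)^Tr(x)=1$. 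This agrees with the definition of $\Pi(x)$ given before the lemma, since a rank-one projection onto $\operatorname{span}r(x)$ along $\ker\ell(x)^T$ is uniquely determined. Moreover, $\Pi(x)$ commutes with $P(x)$, and $P(x)\Pi(x)=\lambda(x)\Pi(x)$.

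Second, decompose the space as $\mathbb C^N=\operatorname{Ran}\Pi(x)\oplus\operatorname{Ran}(I-\Pi(x))$. Both summands are invariant under $P(x)$, so
\[
I-P(x)=(1-\lambda(x))\Pi(x)+(I-P(x))(I-\Pi(x)).
\]
Whenever $\lambda(x)\neq1$, this yields
\[
(I-P(x))^{-1}=\frac{\Pi(x)}{1-\lambda(x)}+R(x),\qquad
R(x)=\frac{1}{2\pi i}\oint_{\Gamma'}\frac{(\mu I-P(x))^{-1}}{1-\mu}\,d\mu\,(I-\Pi(x)).
\]
Here $R(x)$ is the reduced resolvent at the point $1$. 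It can be realized as the inverse of $I-P(x)$ restricted to $\operatorname{Ran}(I-\Pi(x))$ and extended by $0$ on $\operatorname{Ran}\Pi(x)$. Equivalently, with $\Gamma'$ a contour enclosing the spectrum of $P(x)$ outside $\Gamma$, it is given by the contour integral above.

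The key point is that $R(x)$ is analytic on all of $U$, including at $x=1$. This holds because the spectrum of $P(x)$ restricted to $\operatorname{Ran}(I-\Pi(x))$ stays outside $\Gamma$, and hence away from $1$, uniformly for $x\in U$. The integrand is therefore jointly continuous and analytic in $x$.

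The main obstacle is exactly this analyticity of $R$ at $x=1$ together with the existence of a punctured neighborhood on which $\lambda(x)\neq1$. The latter follows because $\lambda$ is analytic and not identically $1$: by Lemma~7, $1-\lambda$ has a zero of order one or two at $x=1$. Hence \eqref{al_nacer} holds on $U\setminus\{1\}$ after shrinking $U$ if needed. I would cite Kato's Chapter~II, §1.4, for the analyticity of the Riesz projection and of the reduced resolvent, rather than reprove it.
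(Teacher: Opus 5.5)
Your proof is correct, but it establishes the analyticity of the remainder differently from the paper.

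Both arguments start from the same splitting $I-P(x)=(1-\lambda(x))\Pi(x)+(I-P(x))(I-\Pi(x))$. You obtain analyticity of $\Pi(x)$ and of $R(x)$ from the Riesz--Dunford calculus. That means the resolvent integral over $\Gamma$, and the reduced resolvent written as a contour integral of $(1-\mu)^{-1}(\mu I-P(x))^{-1}$ over a contour separating the rest of the spectrum from $\Gamma$. You then cite Kato for the analytic dependence on $x$.

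The paper instead takes the analytic eigenvectors $r(x),\ell(x)$ as given and sets $Q=I-\Pi$. It then introduces the fixed-size matrix $B(x)=I-P(x)+\Pi(x)$. This matrix is invertible at $x=1$, since it acts as $1$ on the range of $\Pi(1)$ and as $I-P$ on the range of $Q(1)$. Then $R=QB^{-1}Q$ is analytic simply because $B^{-1}$ is. The identity $(I-P)S=S(I-P)=\Pi+Q=I$ is checked by direct algebra, using that $P,\Pi,Q$ commute.

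What each route buys:
\begin{itemize}
\item Yours is the standard perturbation-theoretic argument. It produces $\Pi(x)$ and its analyticity without presupposing analytic eigenvectors, and it would carry over verbatim to operators. The cost is that it relies on the functional calculus and a cited analyticity theorem.
\item The paper's is elementary and self-contained, avoiding both contour integrals and inversion on an $x$-dependent subspace. It also has a side benefit: $R(1)=Q(1)B(1)^{-1}Q(1)=B(1)^{-1}-\mathbf 1\pi^T$ is exactly the Kemeny--Snell matrix $Z$ of Lemma~8.
\end{itemize}

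One point in your favor: you explicitly justify that $\lambda(x)\neq1$ on a punctured neighborhood, via $\lambda'(1)=-v$ and Lemma~7. Lemma~7 does not depend on the present lemma, so this is not circular. The paper's proof uses this fact silently when it divides by $1-\lambda(x)$.
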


\begin{proof} For brevity, we suppress the arguments of functions throughout this proof. Choose the vectors $\ell$ and $r$ so that $\ell^T r=1$. Then
\[
\Pi^2=r\,\circ \ell\,r\,\circ \ell=r\circ \ell=\Pi,
\]
that is, $\Pi$ is a projection.

Moreover,
\[
P\Pi=P(r\circ \ell)=(\lambda r)\circ \ell=\lambda\Pi,
\]
and similarly $\Pi P=\lambda\Pi$.
Thus,
\begin{equation}\label{eq:comm-rel}
P\Pi=\Pi P=\lambda\Pi.
\end{equation}
Set $Q=I-\Pi$. Then
\begin{equation}\label{eq:PQ-QP}
PQ=QP=P-\lambda\Pi.
\end{equation}

Clearly,
\[
P=(\Pi+Q)P(\Pi+Q).
\]
Expanding the product gives
\[
P=\Pi P\Pi+\Pi P Q+Q P\Pi+Q P Q.
\]
By \eqref{eq:comm-rel} and \eqref{eq:PQ-QP},
\[
\Pi P\Pi=\lambda\Pi,
\qquad
\Pi P Q=\Pi Q P=0,
\qquad
Q P\Pi=Q\lambda\Pi=0.
\]
Consequently,
\begin{equation}\label{eq:P-splitting}
P=\lambda\Pi+Q P Q.
\end{equation}

It follows that
\begin{equation}\label{eq:I-minus-P-splitting}
I-P=(\Pi+Q)-\lambda\Pi-QPQ=(1-\lambda)\Pi+Q(I-P)Q.
\end{equation}

\medskip

Since $1$ is a simple eigenvalue of $P(1)$, all other eigenvalues of $P(1)$ differ from $1$. By continuity of eigenvalues, after shrinking the neighborhood of $x=1$, we may assume that for every $x$ in some punctured neighborhood of $1$, all eigenvalues of $P(x)$ except $\lambda(x)$ also differ from $1$.

To avoid using an inverse operator on a subspace depending on $x$,
introduce the fixed-size matrix
\[
B(x):=I-P(x)+\Pi(x).
\]
At $x=1$, the matrix $B(1)$ is invertible: on $\operatorname{Im}\Pi(1)$, it acts
as multiplication by $2-\lambda(1)=1$, whereas on $\operatorname{Im}Q(1)$ it acts
as $I-P(1)$; here $1$ is a simple eigenvalue of $P(1)$.
Thus, by continuity, $B(x)$ is invertible in some neighborhood of
$x=1$, and $B(x)^{-1}$ is analytic there.

Set
\[
R(x):=Q(x)B(x)^{-1}Q(x).
\]
This is a fixed-size matrix, so its analyticity in a neighborhood of
$x=1$ is immediate.

Consider
\[
S(x):=\frac{\Pi(x)}{1-\lambda(x)}+R(x).
\]
By \eqref{eq:comm-rel},
\[
(I-P)\Pi=\Pi(I-P)=(1-\lambda)\Pi,
\]
and hence,
\[
(I-P)\frac{\Pi}{1-\lambda}=\frac{\Pi}{1-\lambda}(I-P)=\Pi.
\]
Next, since $P,\Pi,Q$ commute pairwise, the matrix $B=I-P+\Pi$ also
commutes with $Q$, and
\[
QB=BQ=Q(I-P).
\]
Therefore,
\[
(I-P)R=(I-P)QB^{-1}Q=QB B^{-1}Q=Q,
\]
and similarly,
\[
R(I-P)=Q.
\]
It follows that
\[
(I-P)S=S(I-P)=\Pi+Q=I.
\]
Thus,
\[
(I-P(x))^{-1}=\frac{\Pi(x)}{1-\lambda(x)}+R(x),
\]
where $R(x)$ is analytic near $x=1$.
\end{proof}
\paragraph{Proof of Theorem \ref{th3}.}
\begin{proof}[]
We determine $\nu_l$ from the limit of $(1-x)H_{0l}(x)$ as $x\uparrow1$. By
Proposition~2, $h^n_{0l}\to\nu_l\mathbf1$; hence Abel's theorem gives
\[
\lim_{x\uparrow1}(1-x)H_{0l}(x)=\nu_l\mathbf1.
\]
To compute this limit, we use the Laurent expansion of
$H_{0l}(x)$ near $x=1$. Recall that $F(x)=xI-xP(x)$ for
$x\neq0$, and at $x=0$ it extends as the polynomial \eqref{xi-px}.
By Lemma~9,
\begin{equation}\label{eq:F-inv}
F^{-1}(x)=x^{-1}\left(\frac{\Pi(x)}{1-\lambda(x)}+R(x)\right),
\end{equation}
where $R(x)$ is analytic in some neighborhood of $1$.
We now seek a Laurent expansion of $H_{0l}(x)$ near $x=1$. Lemma~7 and \eqref{lambda-prime-drift} show that if $v=0$, then $\lambda'(1)=0$ while $\lambda''(1)\neq0$. We therefore have the expansion
\begin{equation}\label{eq:lambda}
1-\lambda(x)=-\frac{\lambda''(1)}{2}(x-1)^2+O\bigl((x-1)^3\bigr),
\qquad x\to 1.
\end{equation}

Let
\[
v_l(x):=xP_1e_l-P_{-1}h^1_{0l}.
\]
Then
\begin{equation}\label{eq:w}
v_l(x)=v_{l,0}+(x-1)P_1e_l,
\qquad
v_{l,0}:=P_1e_l-P_{-1}h^1_{0l}.
\end{equation}

Substituting \eqref{eq:w} into the formula for $H_{0l}(x)$ gives

\begin{equation}\label{eq:H-spectral}
H_{0l}(x)=F^{-1}(x)v_l(x)=x^{-1}\frac{\Pi(x)v_l(x)}{1-\lambda(x)}+x^{-1}R(x)v_l(x).
\end{equation}
The second term is analytic near $x=1$, so the asymptotic behavior is determined solely by the first term.

Introduce the scalar function
\[
\varphi_l(x):=\ell(x)^Tv_l(x).
\]
Since $\Pi(x)=r(x) \circ \ell(x)$, we have
\[
\Pi(x)v_l(x)=r(x)\varphi_l(x).
\]
Consequently, \eqref{eq:H-spectral} yields
\begin{equation}\label{eq:H-varphi}
H_{0l}(x)=x^{-1}\,r(x)\,\frac{\varphi_l(x)}{1-\lambda(x)}+C_2(x),
\end{equation}
where $C_2(x)$ is analytic in some neighborhood of $1$.
We show that
\begin{equation}\label{eq:cancel}
\varphi_l(1)=\pi^T v_{l,0}=\pi^T\bigl(P_1e_l-P_{-1}h^1_{0l}\bigr)=0.
\end{equation}

Indeed, if $\varphi_l(1)\neq0$, then \eqref{eq:lambda} and \eqref{eq:H-varphi}, along the real axis as $x\uparrow1$, would imply
\[
H_{0l}(x)= -\frac{2\,\varphi_l(1)}{\lambda''(1)}\,\frac{\mathbf1}{(x-1)^2}+O\!\left(\frac{1}{x-1}\right).
\]
On the other hand, all components of $h^i_{0l}$ are probabilities and lie in $[0,1]$. Thus, for $0<x<1$, componentwise,
\[
0\le H_{0l}(x)=\sum_{i\ge1}x^{i-1}h^i_{0l}\le\frac{\mathbf1}{1-x},
\]
which rules out growth of order $(1-x)^{-2}$. This contradiction proves \eqref{eq:cancel}. Expanding $\varphi_l(x)$ in a Taylor series near $x=1$, we obtain

\[
\varphi_l(x)=\varphi_l'(1)(x-1)+O\bigl((x-1)^2\bigr),
\]
where
\[
\varphi_l'(1)=\ell'(1)^Tv_{l,0}+\pi^T P_1e_l.
\]
Indeed, since
\[
\varphi_l(x)=\ell(x)^Tv_l(x),
\qquad
v_l(x)=xP_1e_l-P_{-1}h^1_{0l},
\]
we have
\[
\varphi_l'(x)=\ell'(x)^Tv_l(x)+\ell(x)^Tv_l'(x),
\qquad
v_l'(x)=P_1e_l.
\]
Thus, at $x=1$,
\begin{equation}\label{eq:phi-prime}
\varphi_l'(1)=\ell'(1)^Tv_{l,0}+\pi^T P_1e_l
=\ell'(1)^T\bigl(P_1e_l-P_{-1}h^1_{0l}\bigr)+\pi^T P_1e_l.
\end{equation}

Substituting the expansions for $r(x)$, $\varphi_l(x)$, and $1-\lambda(x)$ into \eqref{eq:H-varphi}, we obtain
\[
H_{0l}(x)
= x^{-1}\bigl(\mathbf{1}+O(x-1)\bigr)
\frac{\varphi_l'(1)(x-1)+O((x-1)^2)}{-\frac{\lambda''(1)}{2}(x-1)^2+O((x-1)^3)}
+O(1).
\]
Therefore,
\[
H_{0l}(x)
= -\frac{2\varphi_l'(1)}{\lambda''(1)}\,\frac{\mathbf{1}}{x-1}+O(1)
= \frac{2\varphi_l'(1)}{\lambda''(1)}\,\frac{\mathbf{1}}{1-x}+O(1),
\qquad x\to 1.
\]
Multiplying the last equality by $(1-x)$, taking the limit as $x\uparrow1$, and using Proposition~2 and Abel's theorem, we obtain
\[
\lim_{x\uparrow1}(1-x)H_{0l}(x)=\nu_l\mathbf1.
\]
Consequently,
\begin{equation}\label{eq:nu-via-phi}
\nu_l=\frac{2\varphi_l'(1)}{\lambda''(1)}.
\end{equation}

Substituting \eqref{eq:phi-prime} and \eqref{eq:w}, we finally obtain
\begin{equation} \nonumber
\nu_l
=\frac{2}{\lambda''(1)}
\Bigl[
\pi^T P_1e_l
+\ell'(1)^T\bigl(P_1e_l-P_{-1}h^1_{0l}\bigr)
\Bigr],
\end{equation}
which coincides with (\ref{theor3}) when $a=\lambda''(1)$ and $b^T=\ell'(1)^T$.

We now derive an explicit expression for $\ell'(1)^T$.
From the identity
\[
\ell(x)^TP(x)=\lambda(x)\ell(x)^T
\]
differentiation with respect to $x$ gives
\[
\ell'(x)^TP(x)+\ell(x)^TP'(x)=\lambda'(x)\ell(x)^T+\lambda(x)\ell'(x)^T.
\]
Setting $x=1$,
we obtain
\begin{equation}\label{eq:lprime-basic}
\pi^T P'(1)+\ell'(1)^TP=\ell'(1)^T,
\end{equation}

The left eigenvector can be chosen so that, in some neighborhood of $1$,
\[
\ell(x)^T\mathbf{1}\equiv1.
\]
We simultaneously rescale the right eigenvector by the reciprocal factor, so that the condition $\ell(x)^Tr(x)=1$ remains valid.
Then
\begin{equation}\label{eq:lprime-1}
\ell'(1)^T\mathbf{1}=0.
\end{equation}

In \eqref{eq:lprime-basic}, move all terms involving $\ell'(1)^T$ to the right and the remaining terms to the left, and multiply the equation on the right by $Z=(I-P+\mathbf{1}\circ \pi)^{-1}-\mathbf{1}\circ \pi$:
\[
\ell'(1)^T(I-P)Z=\pi^T P'(1)Z.
\]
By Lemma~8,
\[
(I-P)Z=Z(I-P)=I-\mathbf{1}\circ\pi.
\]
Consequently,
\[
\ell'(1)^T(I-\mathbf{1}\circ \pi)=\pi^T P'(1)Z.
\]
But by \eqref{eq:lprime-1},
\[
\ell'(1)^T(I-\mathbf{1}\circ \pi)=\ell'(1)^T-(\ell'(1)^T\mathbf{1})\pi^T=\ell'(1)^T.
\]
Thus,
\[
b^T=\ell'(1)^T=\pi^T P'(1)Z=\pi^T(P_1-P_{-1})Z.
\]

Now compute $a=\lambda''(1)$. Choose another right eigenvector $\rho(x)$, which differs from the previous one only by normalization.
Differentiate twice the identity
\[
P(x)\rho(x)=\lambda(x)\rho(x).
\]
We obtain
\[
P''(x)\rho(x)+2P'(x)\rho'(x)+P(x)\rho''(x)
=
\lambda''(x)\rho(x)+2\lambda'(x)\rho'(x)+\lambda(x)\rho''(x).
\]
Setting $x=1$ and multiplying on the left by $\pi^T$, taking into account
\[
\rho(1)=\mathbf{1},
\qquad
\pi^T P=\pi^T,
\qquad
\lambda(1)=1,
\qquad
\lambda'(1)=0,
\]
we obtain
\begin{equation}\label{eq:lambda-second-pre}
\lambda''(1)=\pi^T P''(1)\mathbf{1}+2\pi^T P'(1)\rho'(1).
\end{equation}

To compute $\rho'(1)$, differentiate the identity $P(x)\rho(x)=\lambda(x)\rho(x)$ once:
\[
P'(1)\mathbf{1}+P\rho'(1)=\lambda'(1)\mathbf{1}+\rho'(1).
\]
Since $\lambda'(1)=0$, we have
\[
(I-P)\rho'(1)=P'(1)\mathbf{1}.
\]
Impose the normalization $\pi^T\rho(x)\equiv1$, which implies
\[
\pi^T\rho'(1)=0.
\]
Thus the unique solution satisfying $\pi^T\rho'(1)=0$ is
\[
\rho'(1)=ZP'(1)\mathbf{1}.
\]
Substituting this into \eqref{eq:lambda-second-pre}, we obtain
\begin{equation}\label{eq:lambda-second-general}
\lambda''(1)=\pi^T P''(1)\mathbf{1}+2\pi^T P'(1)ZP'(1)\mathbf{1}.
\end{equation}

Now use the explicit form of $P(x)$:
\[
P(x)=xP_1+P_0+x^{-1}P_{-1}.
\]
Then
\[
P''(x)=2x^{-3}P_{-1},
\]
and hence,
\[
P''(1)=2P_{-1}.
\]
From \eqref{eq:lambda-second-general}, we finally obtain
\begin{equation}\label{eq:lambda-second-final}
a=\lambda''(1)=2\pi^T P_{-1}\mathbf{1}+2\pi^T(P_1-P_{-1})Z(P_1-P_{-1})\mathbf{1}.
\end{equation}

The theorem is proved in full.
\end{proof}
\section{Application of the results to a system of independent particles}\label{s4}
Consider a system of independent particles, each moving according to the same
probabilistic laws as the original Markov chain. For every starting
site $(i_0,j_0)$, where $i_0\ge1$ and $j_0\in S$, we start an independent copy
of the chain, denoted by $\xi^{(i_0,j_0)}$. Absorption occurs at the
first hitting time of level $0$.

Let $N_n(j_0,j)$ be the number of particles starting at the sites
$(i,j_0)$, $1\le i\le n$, that are absorbed at $(0,j)$, and let $N_n(j)$ be
the total number of particles whose initial first coordinate does not exceed $n$ and that are absorbed
at $(0,j)$. More formally,
\[
N_n(j_0,j)=\sum_{i=1}^n
I\!\left\{\left(\xi^{(i,j_0)}_{\tau_i^{(i,j_0)}}\right)^2=j\right\},
\]
\[
N_n(j)=\sum_{j_0=1}^N N_n(j_0,j).
\]

\begin{theorem}\label{th4}

Suppose that $v<0$. Then for every $j\in S$,
\[
\frac{N_n(j)}{\sum_{k=1}^N N_n(k)}\xrightarrow[n\to\infty]{a.s.}\nu_j,
\]
where $\nu_j$ is given by \eqref{theor2}.
\end{theorem}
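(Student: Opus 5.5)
The plan is to reduce the statement to a strong law of large numbers for independent, non-identically distributed bounded indicators, with the means controlled by Proposition~\ref{th1}. Fix $j_0,j\in S$ and write
\[
X_i=I\!\left\{\left(\xi^{(i,j_0)}_{\tau_i^{(i,j_0)}}\right)^2=j\right\},\qquad i\ge1 .
\]
These are independent because distinct particles are independent copies of the chain. Since $v<0$, absorption happens almost surely, so $E X_i=h_{0j}^{ij_0}=(Q^i)_{j_0j}$ by Lemma~\ref{l1}.

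First, I will show that $\frac1n\sum_{i=1}^n (X_i-EX_i)\to0$ almost surely. The variances are bounded by $1/4$, so $\sum_i \mathrm{Var}(X_i)/i^2<\infty$. Kolmogorov's strong law for independent variables then gives the claim. (A fourth-moment bound with Borel--Cantelli would also work.)

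Second, I need the Cesàro averages of the means. By Proposition~\ref{th1}, $(Q^i)_{j_0j}\to\nu_j$, so $\frac1n\sum_{i=1}^n EX_i\to\nu_j$. It follows that $N_n(j_0,j)/n\to\nu_j$ almost surely. Summing over $j_0$ gives $N_n(j)/n\to N\nu_j$ almost surely.

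Third, every particle is absorbed almost surely when $v<0$. Hence $\sum_k N_n(k)=Nn$ almost surely for each $n$, and a countable intersection of full-measure events keeps this valid for all $n$ simultaneously. Even without using this identity, summing the previous limits over $k$ gives $\sum_k N_n(k)/n\to N\sum_k\nu_k=N$. Either way, the ratio converges to $N\nu_j/N=\nu_j$ almost surely, and $\nu_j$ has the form \eqref{theor2} by Theorem~\ref{th2}.

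No step is deep. The only point needing care is that the $X_i$ are not identically distributed, so the i.i.d.\ strong law cannot be quoted directly. This is why I go through the Kolmogorov criterion together with Cesàro convergence of the means. I would also check explicitly that the exceptional null sets, one for each pair $(j_0,j)$, are finite in number, so their union is still null.
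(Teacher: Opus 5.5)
Your proof is correct and follows the paper's argument. The ingredients are the same: independent indicators across particles, Kolmogorov's strong law, Proposition~\ref{th1} with Ces\`aro averaging to get $N_n(j_0,j)/n\to\nu_j$, then summing over $j_0$ and forming the ratio. The one small difference is in Lemma~10, where the paper normalizes by $b_n=\mathbf E N_n(j_0,j)$; that choice needs $\nu_j>0$ and an integral comparison to check $\sum_n \operatorname{Var}X_n/b_n^2<\infty$, whereas your normalization by $n$ with $\operatorname{Var}X_i\le 1/4$ is slightly more direct and does not use positivity of $\nu_j$.
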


The proof requires the following lemma.

\begin{lemma} Suppose that $v<0$. Then, for any $j_0,j\in S$,
\[
\frac{N_n(j_0,j)}{\mathbf E N_n(j_0,j)}\xrightarrow[n\to\infty]{a.s.}1.
\]
\end{lemma}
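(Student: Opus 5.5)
The plan is to view $N_n(j_0,j)$ as a sum of independent, bounded, non-identically distributed indicators and apply a strong law for such sums. Set
\[
X_i=I\!\left\{\left(\xi^{(i,j_0)}_{\tau_i^{(i,j_0)}}\right)^2=j\right\},\qquad i\ge1 .
\]
The copies $\xi^{(i,j_0)}$ are independent, so the $X_i$ are independent Bernoulli variables. Since $v<0$, we have $P(\tau_{i}<\infty)=1$, and Lemma~1 gives
\[
p_i:=\mathbf E X_i=h_{0j}^{ij_0}=(Q^i)_{j_0j}.
\]
Proposition~2 gives $p_i\to\nu_j$. The positivity of all entries of $Q$ gives $\nu_j>0$; indeed $\nu^T=\nu^TQ$ has strictly positive components. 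Hence, by Cesàro's theorem,
\[
\mathbf E N_n(j_0,j)=\sum_{i=1}^n p_i=n\nu_j+o(n).
\]
In particular, $\mathbf E N_n(j_0,j)$ grows linearly in $n$.

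Next, write
\[
\frac{N_n(j_0,j)}{\mathbf E N_n(j_0,j)}-1=\frac{n}{\mathbf E N_n(j_0,j)}\cdot\frac1n\sum_{i=1}^n (X_i-p_i).
\]
The first factor tends to $1/\nu_j<\infty$. It therefore suffices to prove
\[
\frac1n\sum_{i=1}^n(X_i-p_i)\to0 \quad\text{a.s.}
\]
The centered variables $X_i-p_i$ are independent, mean zero, and have variance at most $1/4$. Hence $\sum_i \operatorname{Var}(X_i)/i^2<\infty$. Kolmogorov's strong law for independent, non-identically distributed summands (equivalently, Kolmogorov's two-series theorem followed by Kronecker's lemma) then yields the claim. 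Alternatively, one can use a fourth-moment bound, $\mathbf E\bigl(\sum_{i\le n}(X_i-p_i)\bigr)^4\le Cn^2$, together with Chebyshev's inequality and Borel--Cantelli.

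The step that needs care is not the SLLN itself but ensuring that the denominator is of exact order $n$. This is the only place where $v<0$ and the structure of the embedded chain enter. The condition $v<0$ is what makes the indicators genuinely probabilities of the hitting point (each particle is absorbed a.s.) and makes Lemma~1 and Proposition~2 applicable. The strict positivity $\nu_j>0$, coming from $q_{ij}>0$, rules out a degenerate denominator. I would state this explicitly before invoking Kolmogorov's theorem. The rest is routine.
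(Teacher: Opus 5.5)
Your proof is correct and follows essentially the same route as the paper: independent Bernoulli indicators with $\mathbf E X_i=h^{ij_0}_{0j}\to\nu_j>0$ (Proposition~2 together with positivity of $Q$), Cesàro averaging, and Kolmogorov's strong law for independent, non-identically distributed summands. The only difference is the normalization. You divide by $n$, so $\sum_i\operatorname{Var}X_i/i^2<\infty$ is immediate, and then correct by $\mathbf E N_n(j_0,j)/n\to\nu_j$; the paper applies Kolmogorov's criterion directly with $b_n=\mathbf E N_n(j_0,j)$, using the bound $\sum_n (b_n-b_{n-1})/b_n^2\le 2/b_1$.
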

\begin{proof}
Fix $j_0,j\in S$ and set
\[
A_i=\left\{\left(\xi^{(i,j_0)}_{\tau_i^{(i,j_0)}}\right)^2=j\right\},
\qquad X_i=I_{A_i}.
\]
By independence of the particles, the random variables $X_i$ are independent. Moreover,
\[
P(A_i)=h_{0j}^{ij_0}.
\]
Set
\[
b_n=\sum_{i=1}^nP(A_i)=\mathbf E N_n(j_0,j).
\]
By Proposition~2, $P(A_i)\to\nu_j$. Since the embedded chain is positive,
$\nu_j>0$, so $b_n\to\infty$ and
\begin{equation}\label{bn-cesaro}
\frac{b_n}{n}\longrightarrow\nu_j.
\end{equation}

For Kolmogorov's strong law of large numbers, it is enough
to verify the condition
\begin{equation}\label{kolmogorov-condition}
\sum_{n=1}^\infty\frac{\operatorname{Var}X_n}{b_n^2}<\infty.
\end{equation}
We have
\[
\sum_{n=1}^\infty\frac{\operatorname{Var}X_n}{b_n^2}
=\sum_{n=1}^\infty\frac{P(A_n)(1-P(A_n))}{b_n^2}
\le\sum_{n=1}^\infty\frac{b_n-b_{n-1}}{b_n^2},
\qquad b_0=0.
\]
The first term of the last series is $1/b_1$. For $n\ge2$, monotonicity of $b_n$
gives
\[
\frac{b_n-b_{n-1}}{b_n^2}
\le \int_{b_{n-1}}^{b_n}\frac{dx}{x^2}.
\]
Consequently,
\[
\sum_{n=1}^\infty\frac{b_n-b_{n-1}}{b_n^2}
\le\frac1{b_1}+\int_{b_1}^{\infty}\frac{dx}{x^2}
=\frac2{b_1}<\infty.
\]
Thus \eqref{kolmogorov-condition} holds, and
\[
\frac{\sum_{i=1}^n(X_i-P(A_i))}{b_n}\xrightarrow{a.s.}0.
\]
Since $\sum_{i=1}^nX_i=N_n(j_0,j)$, the lemma follows.
\end{proof}
\begin{proof}[]
By Lemma~10 and \eqref{bn-cesaro},
\[
\frac{N_n(j_0,j)}n
=\frac{N_n(j_0,j)}{\mathbf E N_n(j_0,j)}
\frac{\mathbf E N_n(j_0,j)}n
\xrightarrow{a.s.}\nu_j
\]
for any $j_0,j\in S$. Hence, by the definition of $N_n(j)$,
\[
\frac{N_n(j)}n
=\sum_{j_0=1}^N\frac{N_n(j_0,j)}n
\xrightarrow{a.s.}N\nu_j.
\]
This holds for every $j\in S$, and therefore,
\[
\frac1n\sum_{k=1}^N N_n(k)
\xrightarrow{a.s.}N\sum_{k=1}^N\nu_k=N.
\]
It follows that
\[
\frac{N_n(j)}{\sum_{k=1}^N N_n(k)}
=\frac{N_n(j)/n}{\sum_{k=1}^N N_n(k)/n}
\xrightarrow{a.s.}\nu_j.
\]
The theorem is proved.
\end{proof}
\section*{Example: a homogeneous strip of height 2}

Consider a random walk in a strip of height $2$. To ensure that this example satisfies
the original assumption of strict positivity of all admissible transition probabilities,
set
\[
B_\theta=
\begin{pmatrix}
1-\theta&\theta\\
\theta&1-\theta
\end{pmatrix},
\qquad 0<\theta<\frac12,
\]
and put
\[
P_1=p_-B_\theta,\qquad
P_0=p_0B_\theta,\qquad
P_{-1}=p_+B_\theta,
\]
where
\[
p_+,p_-,p_0>0,\qquad p_++p_-+p_0=1,\qquad p_+<p_-.
\]
All entries of the three matrices are positive.

The induced chain has transition matrix
\[
P=P_{-1}+P_0+P_1=B_\theta,
\]
hence,
\[
\pi=\binom{1/2}{1/2},\qquad
\pi^T=\left(\frac12,\frac12\right).
\]
Since $B_\theta\mathbf1=\mathbf1$,
\[
m=(P_{-1}-P_1)\mathbf1=(p_+-p_-)\mathbf1,
\]
and therefore,
\[
v=\pi^Tm=p_+-p_-<0.
\]
Thus the assumption of Theorem~\ref{th2} is satisfied.

Let $H$ be the matrix whose $l$-th column is $h^1_{0l}$.
We compute it using the proposed algorithm. We have
\[
G(x)=P_{-1}x^2+(P_0-I)x+P_1
=p_+x^2B_\theta+(p_0B_\theta-I)x+p_-B_\theta.
\]
The matrix $B_\theta$ has two eigenvectors,
\[
u^{(1)}=\binom11,
\qquad
u^{(2)}=\binom1{-1},
\]
with the corresponding eigenvalues
\[
1,\qquad c:=1-2\theta\in(0,1).
\]
Thus $G(x)$ acts independently along these two directions.

On $u^{(1)}$, we obtain
\[
G(x)u^{(1)}
=\bigl(p_+x^2+(p_0-1)x+p_-\bigr)u^{(1)}.
\]
Using $p_++p_-+p_0=1$,
\[
p_+x^2+(p_0-1)x+p_-
=(x-1)(p_+x-p_-).
\]
Thus the roots of the first factor are
\[
x=1,
\qquad
x=\frac{p_-}{p_+}>1,
\]
and only $x=1$ remains in the closed unit disk.

On $u^{(2)}$, we have
\[
G(x)u^{(2)}
=\bigl(cp_+x^2+(cp_0-1)x+cp_-\bigr)u^{(2)}.
\]
The roots of this quadratic polynomial are
\[
x_{\pm}
=\frac{1-cp_0\pm\sqrt{(1-cp_0)^2-4c^2p_+p_-}}{2cp_+}.
\]
By Lemma~4, exactly one of them lies in the open unit disk. Denote it by
\[
\lambda=
\frac{1-cp_0-\sqrt{(1-cp_0)^2-4c^2p_+p_-}}{2cp_+}.
\]
Thus the eigenvalues of $H$ are
\[
\lambda_1=1,\qquad \lambda_2=\lambda,
\]
and the corresponding eigenvectors are $u^{(1)}$ and $u^{(2)}$. Both eigenvalues
are simple, so no generalized eigenvectors arise in this example.

Construct the matrix of the Jordan basis:
\[
S=
\begin{pmatrix}
1&1\\
1&-1
\end{pmatrix},
\qquad
S^{-1}=\frac12
\begin{pmatrix}
1&1\\
1&-1
\end{pmatrix}.
\]
Then
\[
H=S
\begin{pmatrix}
1&0\\
0&\lambda
\end{pmatrix}
S^{-1}
=\frac12
\begin{pmatrix}
1+\lambda&1-\lambda\\
1-\lambda&1+\lambda
\end{pmatrix}.
\]
Consequently,
\[
h^1_{01}
=\binom{(1+\lambda)/2}{(1-\lambda)/2},
\qquad
h^1_{02}
=\binom{(1-\lambda)/2}{(1+\lambda)/2}.
\]

Now compute $\nu$ using Theorem~\ref{th2}. Since
$\pi^TB_\theta=\pi^T$, for $l=1,2$,
\[
\pi^TP_1e_l=\frac{p_-}{2}.
\]
Moreover, $H$ is stochastic and symmetric, hence
$\pi^Th^1_{0l}=1/2$, and
\[
\pi^TP_{-1}h^1_{0l}
=p_+\pi^TB_\theta h^1_{0l}
=p_+\pi^Th^1_{0l}
=\frac{p_+}{2}.
\]
Thus,
\[
\pi^T(P_1e_l-P_{-1}h^1_{0l})
=\frac{p_--p_+}{2}
=-\frac v2.
\]
By Theorem~\ref{th2},
\[
\nu_l=-\frac{-v/2}{v}=\frac12,
\qquad l=1,2.
\]
Therefore,
\[
\nu=\binom{1/2}{1/2}.
\]

\end{document}